\theoremstyle{thmstyleone}%
\newtheorem{theorem}{Theorem}
\newtheorem{lemma}[theorem]{Lemma}%
\theoremstyle{thmstyletwo}%
\newtheorem{remark}{Remark}%
\theoremstyle{thmstylethree}%
\newtheorem{definition}{Definition}%
\newcommand{\ra}{\rightarrow}
\newcommand{\Ra}{\Rightarrow}
\newcommand{\re}{\text{Re}}
  \def\BA{{\bf A}}
\def\Bb{{\bf b}}  \def\BB{{\bf B}}
\def\Bc{{\bf c}}  \def\BC{{\bf C}}
 \def\bbC{{\mathbb{C}}}
\def\Be{{\bf e}}
\def\CG{{\cal G}}
\def\Cg{{\cal g}}
\def\Bh{{\bf h}}  \def\BH{{\bf H}}
  \def\BI{{\bf I}}
 \def\bbI{{\mathbb{I}}}
\def\Bp{{\bf p}}  \def\BP{{\bf P}}
\def\CP{{\cal P}}
\def\Bq{{\bf q}}  \def\BQ{{\bf Q}}
  \def\BR{{\bf R}}
 \def\bbR{{\mathbb{R}}}
\def\Bu{{\bf u}}
\def\Bv{{\bf v}}  \def\BV{{\bf V}}
\def\Bw{{\bf w}}  \def\BW{{\bf W}}
\def\Bx{{\bf x}}  \def\BX{{\bf X}}
\def\By{{\bf y}}  \def\BY{{\bf Y}}
 \def\bbZ{{\mathbb{Z}}}
\def\BLambda{{\boldsymbol{\Lambda}}}
\def\BAs{\BA{\kern-1pt}}      
\def\BPs{\BP{\kern-1.2pt}}    
\def\BVs{\BV{\kern-1.2pt}}    
\def\CPs{\CP{\kern-1.2pt}}    
\def\ii{\dot{\imath\!\imath}}
\def\Lop{\mathscr{L}}
\def\Balpha{{\boldsymbol{\alpha}}}
\def\BLambda{{\boldsymbol{\Lambda}}}
\def\Bpsi{{\boldsymbol{\psi}}}
\def\BPsi{{\boldsymbol{\Psi}}}
\def\BLambda{{\boldsymbol{\Lambda}}}
\def\Bhat{\widehat{\mathbf{B}}}
\def\bhat{\widehat{\mathbf{b}}}
\def\Chat{\widehat{\mathbf{C}}}
\def\chat{\widehat{\mathbf{c}}}
\def\Atil{\widetilde{\mathbf{A}}}
\def\Etil{\widetilde{\mathbf{E}}}
\def\btil{\widetilde{\mathbf{b}}}
\def\Ctil{\widetilde{\mathbf{C}}}
\def\htil{\widetilde{\mathbf{h}}}
\DeclareFontFamily{U}{mathx}{}
\DeclareFontShape{U}{mathx}{m}{n}{<-> mathx10}{}
\DeclareSymbolFont{mathx}{U}{mathx}{m}{n}
\DeclareMathAccent{\widecheck}{0}{mathx}{"71}
\begin{document}

\title[Partial Floquet and MOR of LTP Systems]{ Partial Floquet Transformation and  
Model Order Reduction of Linear  
Time-Periodic Systems
}


\author*[1]{\fnm{Sam} \sur{Bender}} \email{skbender@vt.edu}

\author[1]{\fnm{Christopher} \sur{Beattie}} \email{beattie@vt.edu}

\affil[1]{\orgdiv{Department of Mathematics}, \orgname{Virginia Polytechnic Institute\\ and State University}, \orgaddress{\city{Blacksburg},  \state{Virginia} \postcode{24061}, \country{USA}}}


\abstract{Time-periodic dynamical systems occur commonly both in nature and as engineered systems.   Large-scale linear time-periodic dynamical systems, for example, may arise through linearization of a nonlinear system about a given periodic solution (possibly as a consequence of a baseline periodic forcing) with subsequent spatial discretization. The potential need to simulate responses to a wide variety of input profiles (viewed as perturbations off a baseline periodic forcing) creates a potent incentive for effective model reduction strategies applicable to linear time-periodic (\textsc{ltp}) systems.  Classical approaches that take into account the underlying time-periodic system structure often utilize the Floquet transform; however, computation of the Floquet transform is typically intractable for large order systems.  In this paper, we develop the notion of a \emph{partial Floquet transformation} connected to selected invariant subspaces of a time-varying differential operator associated with the \textsc{ltp} system.  We modify and repurpose the \emph{Dominant Pole Algorithm} of Rommes to identify effective invariant subspaces useful for model reduction.  We discuss the construction of associated partial Floquet transformations and time-varying reduction bases with which to produce effective reduced-order \textsc{ltp} models and illustrate the process on a simple time-periodic system. }

\keywords{Model Order Reduction, Periodic Dynamical Systems, Dominant Pole Algorithm, Floquet Transform,  Harmonic Transfer Function.}  


\maketitle

\section{Introduction}\label{intro}

Time-periodic dynamical systems occur commonly, both in nature and as engineered systems, often as a consequence of periodic forcing due to rotation (e.g., the Earth's rotation generates both tidal gravity forces and diurnal temperature gradients that cyclically drive atmospheric and ocean flows;  gyroscopic forces can generate periodic forcing causing significant vibration and noise in vehicles).   
More broadly, periodic phenomena can occur through the emergence of a dynamic balance between inertial and various restoring forces.  For example, a structure exposed to an otherwise steady wind- or current-flow can experience large oscillations caused by vortex shedding or flutter.  This can be destructive (the Tacoma Narrows bridge failure is a famous example); but there can be positive effects as well (e.g., high-efficiency wind turbines may take advantage of these effects).  \emph{Linear} time-periodic (\textsc{ltp}) systems play a fundamental role in the analysis, simulation, and control of such phenomena even when the underlying models reflect fundamentally nonlinear dynamics, since by their character the periodic phenomena of interest emerge as components of a ``center manifold" and must themselves be stable at least when subjected to small perturbations.  Were this not the case, say for a natural system, oscillatory phenomena would not generally be observed, while for an engineered system, they would not generally be desired.   Beyond this, computational strategies for extracting periodic solutions of nonlinear systems necessitate repeated solution of linear(ized) periodic systems, and this leads (naturally) to the the question of effective model order reduction strategies for such systems. See, e.g., \cite{bittantiZeroDynamicsHelicopter1996,gazzolaTacoma2022,OutOfControl200}.

In Section 2, we introduce the analytic setting that provides the framework for our work. We also define the \emph{partial Floquet transformation} and connect it with the classical Floquet transformation.  The framework developed in Section 2 identifies invariant subspaces of a time-varying differential operator associated to the \textsc{ltp} system dynamics with families of reduction bases and hence to classes of \textsc{ltp}-reduced-order models.  In Section 3, we consider the problem of identifying ``good" reduced-order models by identifying in turn ``good" invariant subspaces.  This is done by developing notions of \emph{pole dominance} in an \textsc{ltp} setting.  In Section 4, we then modify and repurpose the \emph{Dominant Pole Algorithm} of Rommes \cite{rommesDPA} to account for our notion of \textsc{ltp}-\emph{pole dominance} and use it to identify effective \textsc{ltp}-reduced-order models. We illustrate its use on a simple example in Section 5 and provide concluding remarks in Section 6.

\section{Problem Setting}\label{sec2_problemSetting}

We consider time-periodic dynamical systems of the form:
\begin{equation}
\label{eq:FOM}
    G: \left\{
	\begin{aligned}
			\dot{\mathbf{x}}(t) &= \mathbf{A}(t) \mathbf{x}(t) + \mathbf{b}(t) u(t)\\
		y(t) &= \Bc(t)^* \mathbf{x}(t)
	\end{aligned},\right.
\end{equation}
where for each $t\in\mathbb{R}$, $\mathbf{A}(t) \in \mathbb{C}^{n \times n}$ and $\mathbf{b}(t), \mathbf{c}(t)\in\mathbb{C}^{n}$. We further assume that $\mathbf{A}(t)$, $\mathbf{b}(t)$, and $\mathbf{c}(t)$ are locally integrable and $T$-periodic (i.e., $\mathbf{A}(t) = \mathbf{A}(t+T)$, $\mathbf{b}(t) = \mathbf{b}(t+T)$ and $\mathbf{c}(t) = \mathbf{c}(t+T)$ $t$-almost everywhere for a fixed $T>0$), and so, the Carath\'{e}odory conditions for existence and uniqueness of  solutions $\mathbf{x}(t)$ hold. 
 
 For any given order $r\ll n$, we seek a \emph{reduced-order time-periodic linear model},
		\begin{equation}
		\label{eq:ROM}
         \begin{aligned}
			\dot{\mathbf{x}}_r(t) &= \mathbf{A}_r(t)\mathbf{x}_r(t) + \mathbf{b}_r(t){u}(t),\\[2mm]
			{y}_r(t) &= \mathbf{c}_r(t)^*\mathbf{x}_r(t)
		\end{aligned} 
	\end{equation}
where  $\mathbf{A}_r(t) \in \mathbb{C}^{r \times r}$ and $\mathbf{b}_r(t),\, \mathbf{c}_r(t) \in \mathbb{C}^{r}$  are obtained via projection so as to be also locally integrable and $T$-periodic, and furthermore defined in such a way so that $ {y}_r(t) \approx {y}(t)$ over a wide class of admissible inputs ${u}(t)$.	

\subsection{The Floquet Transform}
\label{subsec:FloqTrans}
Given an \textsc{ltp}  system as in \eqref{eq:FOM}, the \emph{Floquet transformation} is defined via the \emph{monodromy matrix}, i.e., the fundamental solution matrix for the homogeneous system evaluated at $T$ (the system period); see e.g., \cite{chiconeODE}. The fundamental solution matrix is an absolutely continuous matrix-valued function $\Phi(t)\in\mathbb{C}^{n\times n}$ such that $\Phi(0)=\mathbf{I}$ and 
$\dot{\Phi} = \mathbf{A}(t) \Phi$; the monodromy matrix is then $\Phi(T)$. With a suitable choice of branch cut for the (complex) logarithm, one may define $\mathbf{R}=\frac{1}{T}\log \Phi(T)$ from which it follows that  $\mathbf{P}(t)= \Phi(t)\exp(-t\mathbf{R})$ 
is $T$-periodic. This leads to 
a time-periodic change of variable $\mathbf{z}(t) = \mathbf{P}(t)^{-1} \mathbf{x}(t)$ such that 
\begin{equation} \label{CBTpostFloquet}
	\begin{aligned}
			 \dot{\mathbf{x}}(t) &= \mathbf{A}(t) \mathbf{x}(t) + \mathbf{b}(t) {u}(t)\\
		{y}(t) &= \mathbf{c}(t)^* \mathbf{x}(t)
	\end{aligned} ~ \Longrightarrow ~
	\left\{\begin{aligned}
		\dot{\mathbf{z}}(t) &= \mathbf{R} \mathbf{z}(t) + \mathbf{P}(t)^{-1}\mathbf{b}(t) {u}(t)\\
		{y}(t) &= \mathbf{c}(t)^* \mathbf{P}(t) \mathbf{z}(t).
	\end{aligned} \right.
          \end{equation}
A key consequence of this transformation is that the time-dependence in the system has now been isolated in the input/output ports; we term this system structure a \emph{port-isolated \textsc{ltp} system}.  This transformation illuminates conditions for stability for \eqref{eq:FOM}: the original \textsc{ltp} system is \emph{asymptotically stable} (and hence \textsc{bibo} stable, see e.g., \cite[\S30]{brockett2015FinDimLinSys}) if and only if $\mathbf{R}$ in \eqref{CBTpostFloquet} is a stable matrix. Our interest in this transformation is that it allows one to take advantage of powerful model reduction methods that originally were designed for linear time-invariant systems.

Although Floquet transformation is not normally viewed as a computational tool, for problems of small to moderate size, algorithms have been developed over the past decade that make effective use of Fourier spectral methods to identify $\mathbf{R}$ and $\mathbf{P}(t)$ in a numerically stable way (See e.g.,\cite{deconinckComputingSpectraLinear2006,mooreFloquetTheoryComputational2005}).  
These approaches presume access to the monodromy matrix, which makes them intractable for large-scale \textsc{ltp}  systems.  
Our approach builds up truncated Floquet bases (columns of $\mathbf{P}(t)$), which will serve for the construction of reduced order models.

Floquet transformations are useful yet costly to determine for large-scale \textsc{ltp}  systems.  
The basic transformation follows by noting that the principal fundamental solution matrix, $\Phi(t)$, 
for the homogenous \textsc{ltp} system, $ \dot{\bf x}(t) = {\bf A}(t) {\bf x}(t)$, has special structure:
$\Phi(t) = {\bf P}(t) e^{t\,{\bf R}} $
where ${\bf P}(t) = {\bf P}(t+T)$ is nonsingular and periodic for all $t$ and ${\bf R}$ is a constant
matrix.
As one may note in (\ref{CBTpostFloquet}), the (time-dependent) basis defined in the columns of ${\bf P}(t)$ 
can be used to transform a nonhomogeneous \textsc{ltp} system into a corresponding time-invariant system that may be more amenable to 
analysis (and reduction).   However, the computational bottleneck one faces in direct implementation lies in evaluation of the principal fundamental solution matrix over a period, $\Phi(t)$, followed by (explicit) computation of the matrix logarithm of the monodromy matrix, $\Phi(T)$.  Evaluation of the monodromy matrix becomes computationally intractable as state space dimension increases, though this approach can be found in engineering practice (e.g., \cite{wangHale2001OnMonodromyComp}); computation of the matrix logarithm is numerically delicate but recent advances have made computation of matrix functions less daunting (e.g., \cite{highamAlMohy2010compMatrixFunc,HaleHighamTref2008CompfAbyContInt}).   Nonetheless, difficulties persist for large dimension. 

Recent numerically stable approaches to Floquet transformation are built on Fourier spectral approximation for $\mathbf{P}(t)$ \cite{mooreFloquetTheoryComputational2005}, making the transformation computationally tractable for modest order. One computational innovation for these approaches lies in reformulating the constraint defining $\mathbf{R}$ and $\mathbf{P}(t)$ as an equivalent boundary value problem, 
\begin{equation} \label{FloqBVP}
\dot{\mathbf{P}}(t) = \mathbf{A}(t)\mathbf{P}(t) -\mathbf{P}(t)\mathbf{R},
\end{equation}
where $\mathbf{R}$ is constant and $\mathbf{P}(0)=\mathbf{P}(T)=\mathbf{I}$.  The columns of $\mathbf{P}(t)$ span an 
$n$-dimensional invariant subspace of the linear map $\mathscr{L}=-\frac{d}{dt}+\mathbf{A}(t)$.  Indeed, if 
$(\rho,\mathbf{v})$ is an eigenpair for the matrix $\mathbf{R}$, then $\mathscr{L}(\mathbf{p})=\rho \, \mathbf{p}$ for $\mathbf{p}(t)= \mathbf{P}(t) \mathbf{v}$.   
Define the 
(complex) Hilbert space, $\mathscr{H}$, of vector-valued functions on $[0,T]$ having square integrable components, equipped with an inner product: $\langle \Bw, \Bv \rangle =\frac1T \int_0^T \Bw(t)^*\Bv(t)\,dt$. 
We define $\mathsf{Dom}(\mathscr{L})$ as those $\Bv\in\mathscr{H}$ that have absolutely continuous, $T$-periodic components (thus being differentiable almost everywhere in $[0,T]$), such that the derivative $\dot{\Bv}\in\mathscr{H}$, as well.  In this setting, $\mathscr{L}$ is a densely defined spectral operator on $\mathscr{H}$.  Although the elements $\mathbf{v}\in\mathscr{H}$ are vector-valued functions on $[0,T]$, each may be extended unambiguously to a periodic vector-valued function $\mathbf{v}(t)$ defined for all $t\in\mathbb{R}$. We denote this periodic extension with no change in notation. 

Observe that for the system base frequency ${\omega}=\frac{2\pi}{T}$ and any integer $k$, \eqref{FloqBVP} implies 
\begin{align} 
\left(\mathbf{P}(t)e^{-\dot{\imath\!\imath}{\omega}kt}\right)^{\bullet}&=\dot{\mathbf{P}}(t)e^{-\dot{\imath\!\imath}{\omega}kt}-\dot{\imath\!\imath}{\omega}k\mathbf{P}(t)e^{-\dot{\imath\!\imath}{\omega}kt} \nonumber \\
&=\mathbf{A}(t)\mathbf{P}(t)e^{-\dot{\imath\!\imath}{\omega}kt} -\left(\mathbf{P}(t)e^{-\dot{\imath\!\imath}{\omega}kt}\right)\left(\mathbf{R}+\dot{\imath\!\imath}{\omega}k\mathbf{I}\right) \label{FloqBVPtransl}
\end{align}
and $\left(\mathbf{P}(t)e^{-\dot{\imath\!\imath}{\omega}kt}\right)$ is again $T$-periodic since 
$$
\mathbf{P}(t+T)e^{-\dot{\imath\!\imath}{\omega}k(t+T)}=\mathbf{P}(t)e^{-\dot{\imath\!\imath}{\omega}kt}e^{-\dot{\imath\!\imath}{\omega}kT}=\mathbf{P}(t)e^{-\dot{\imath\!\imath}{\omega}kt}.
$$
Thus the columns of $\left(\mathbf{P}(t)e^{-\dot{\imath\!\imath}{\omega}kt}\right)$ again span an invariant subspace of $\mathscr{L}$ and a phase shift in $\mathbf{P}(t)$ by $e^{-\dot{\imath\!\imath}{\omega}kt}$ produces a purely imaginary shift in  the spectrum of $\mathbf{R}$ by $\dot{\imath\!\imath}{\omega}k$.

The relationship that the Floquet transformation bears to an underlying eigenvalue/invariant subspace problem is at the heart of the extension to a large scale setting.  Through an orthogonal change of basis, $\mathbf{R}$ may be taken to be upper triangular without loss of generality; that is, \eqref{FloqBVP} still holds with $\mathbf{P}(0)=\mathbf{P}(T)$  (but now $\mathbf{P}(0)\neq\mathbf{I}$ typically).  When $\mathbf{R}$ is upper triangular, the leading columns of $\mathbf{P}(t)$  span a family of nested (time-dependent) $T$-periodic subspaces that are invariant for $\mathscr{L}$.   
If $r$ denotes the desired reduction order, we make an additional assumption that $\mathbf{R}$ can be block-diagonalized into the direct sum of an $r\times r$ block and a complementary $(n-r)\times (n-r)$ block, or equivalently that the $r$ leading columns of $\mathbf{P}(t)$ span an invariant subspace for $\mathscr{L}$ while the trailing $n-r$ columns of $\mathbf{P}(t)$ span another invariant subspace for $\mathscr{L}$ having only a trivial intersection with the first.  
As a matter of practice we will only be interested in obtaining the $r$ leading columns of $\mathbf{P}(t)$.   The main algorithmic development that is pursued here seeks low ($r$) dimensional $T$-periodic invariant subspaces that are selectively targeted to produce effective modeling subspaces:
\begin{equation} \label{FloqArn}
\begin{aligned}
 \dot{\mathbf{P}}_r(t) &= \mathbf{A}(t)\mathbf{P}_r(t) -\mathbf{P}_r(t)\mathbf{R}_{r}\\ &\implies \ 
\mathscr{L}[\mathbf{P}_r](t)=-\dot{\mathbf{P}}_r(t)+ \mathbf{A}(t)\mathbf{P}_r(t)=\mathbf{P}_r(t)\mathbf{R}_{r}
\end{aligned}
\end{equation}
where $\mathbf{R}_{r}\in\mathbb{C}^{r\times r}$ is upper triangular.  The $r$ columns of the matrix-valued function, $\mathbf{P}_r(t)\in\mathbb{C}^{n\times r}$, are linearly independent $T$-periodic vector functions that span an $r$-dimensional invariant subspace of $\mathscr{L}$.  $\mathbf{P}_r(0)$ will not generally contain \emph{any} columns of the identity. We may assume the columns of $\mathbf{P}_r(0)$ to be orthonormal, but later it will be convenient to allow the columns of $\mathbf{P}_r(t)$ to contain select \emph{eigenvectors} of  $\mathscr{L}$.
We will call \eqref{FloqArn} a \emph{truncated Floquet decomposition} and view it as a step toward a \emph{partial Floquet transformation}. 

\subsection{Port-isolated {\small LTP} Systems and Partial Floquet Transforms}
\label{se:ltpsys}

Define $\mathscr{L}^{\star}=\frac{d}{dt}+\mathbf{A}^*(t)$, taking 
$\mathsf{Dom}(\mathscr{L}^{\star})\supset\mathsf{Dom}(\mathscr{L})$ dense in  $\mathscr{H}$, and observe that for   $\Bv(t)\in\mathsf{Dom}(\mathscr{L})\subset\mathscr{H}$ and $\Bw(t)\in\mathsf{Dom}(\mathscr{L}^{\star})\subset\mathscr{H}$, 
\[
\begin{aligned}
\langle\Bw,\mathscr{L}\Bv\rangle 
 = & \int_0^T \Bw(t)^*(-\dot{\Bv}(t)+\mathbf{A}(t)\Bv(t))\,dt \\
   = -(\Bw&(T)^*\Bv(T)-\Bw(0)^*\Bv(0))
   +\int_0^T \dot{\Bw}(t)^*\Bv(t)+\Bw(t)^*\mathbf{A}(t)\Bv(t)\,dt =\langle \mathscr{L}^{\star}\Bw, \Bv\rangle.
   \end{aligned}
\]
The boundary terms arising in the middle integration-by-parts step are zero due to periodicity. 
So, $\mathscr{L}^{\star}=\frac{d}{dt}+\mathbf{A}(t)^*$ is a true Hilbert space adjoint of $\mathscr{L}=-\frac{d}{dt}+\mathbf{A}(t)$. We use ``$*$" to denote ``conjugate-transpose" for complex vector/matrix-valued quantities and ``$\star$" to denote the $\mathscr{H}$-operator adjoint.  The similarity in notation is not accidental since for example, the mapping $\mathbf{v}\mapsto \mathbf{P}(t)\mathbf{v}$ in $\mathscr{H}$ (pointwise matrix-vector multiplication) has as its adjoint map in $\mathscr{H}$, $\mathbf{w}\mapsto \mathbf{P}(t)^*\mathbf{w}$ (pointwise matrix-vector multiplication by the conjugate transpose).

Complementary to \eqref{FloqBVP}, we define $\mathbf{Q}(t)=\mathbf{P}(t)^{-*}$ (i.e., the point-wise conjugate transpose of the matrix inverse, $\mathbf{P}(t)^{-1}$), and then, noticing that 
\begin{align*}
  \dot{\mathbf{Q}}(t)=&-\mathbf{P}(t)^{-*}\ \dot{\mathbf{P}}(t)^*\ \mathbf{P}(t)^{-*} \\
=& -\mathbf{P}(t)^{-*}(\mathbf{P}(t)^*\mathbf{A}(t)^* -\mathbf{R}^*\mathbf{P}(t)^*)\mathbf{P}(t)^{-*}\\
=& -\mathbf{A}(t)^*\mathbf{P}(t)^{-*} +\mathbf{P}(t)^{-*}\mathbf{R}^{*}
=-\mathbf{A}(t)^*\mathbf{Q}(t) +\mathbf{Q}(t)\mathbf{R}^*,
\end{align*}
we may determine that $\mathbf{Q}(t)$ satisfies:
\begin{equation} \label{FloqBVPdual}
\mathscr{L}^{\star}\mathbf{Q}(t)=\dot{\mathbf{Q}}(t) +\mathbf{A}(t)^*\mathbf{Q}(t) =\mathbf{Q}(t)\mathbf{R}^{*} 
\end{equation}
Thus, \emph{left invariant subspaces} of $\mathscr{L}$ can be  interpreted as (right) invariant subspaces of $\mathscr{L}^{\star}$. 
In particular, a left invariant subspace associated with $\mathbf{R}_{r}$ (as introduced in \eqref{FloqArn})
will be complementary to the (right) invariant subspace spanned by the columns of $\mathbf{P}_r(t)$ and can be characterized as a solution to 
\begin{equation} \label{redFloqBVPdual}
\begin{aligned}
    -\dot{\mathbf{Q}}_r(t)^{*} &= \mathbf{Q}_r(t)^{*}\mathbf{A}(t) -\mathbf{R}_{r}\mathbf{Q}_r(t)^{*}\\
    \implies &\ 
\mathscr{L}^{\star}\mathbf{Q}_r(t)=\dot{\mathbf{Q}}_r(t) +\mathbf{A}(t)^*\mathbf{Q}_r(t) =\mathbf{Q}_r(t)\mathbf{R}_{r}^{*} 
\end{aligned}
\end{equation}
However, if $\mathbf{P}_r(t)$ is interpretted as the leading $r$ columns of $\mathbf{P}$ satisfying \eqref{FloqBVP} with $\mathbf{R}$ block diagonal, then $\mathbf{Q}_r(t)$ in \eqref{redFloqBVPdual} will be similarly determined as the leading $r$ columns of $\mathbf{Q}$ satisfying \eqref{FloqBVPdual} (with same $\mathbf{R}$).  
Naturally we seek effective left and right modeling spaces spanned by the columns of $\BQ_r$ and $\BP_r$, \emph{without} assuming access to the full Floquet transform, i.e., without full knowledge of $\BP(t)$ or $\BQ(t)$. The following theorem characterizes these subspaces directly as invariant subspaces of $\Lop$. 
\begin{theorem} \label{thm:invSubspBases}
Let $\Lop$ be defined as above. If two matrix valued functions $\BP_r(t), \BQ_r(t)\in \bbC^{n\times r}$ satisfy $\mathsf{Ran}(\BP_r(t))\subset\mathsf{Dom}(\mathscr{L})$,
$\mathsf{Ran}(\mathbf{Q}_r(t))\subset\mathsf{Dom}(\mathscr{L})$, 
and
\begin{equation}
\label{leftrightInvariantSub}
\left.\begin{array}{c}
\mathscr{L}[\mathbf{P}_r](t)=\mathbf{P}_r(t)\mathbf{R}_{r}   \\[2mm]
\mathscr{L}^{\star}[\mathbf{Q}_r](t)=\mathbf{Q}_r(t)\mathbf{R}_{r}^{*}
\end{array}\ \right\}
\  \mbox{ for a given constant }  \mathbf{R}_r\in\mathbb{C}^{r\times r}  
\end{equation}
then $\mathbf{Q}_r(t)^{*}\mathbf{P}_r(t)=\mathbf{Q}_r(0)^{*}\mathbf{P}_r(0)$ for all $t\in[0,T]$. In particular,  $\mathbf{Q}(0)^{\star}\mathbf{P}(0)$ is nonsingular if and only if $\mathbf{Q}_r(t)^{*}\mathbf{P}_r(t)$ is nonsingular for each $t\in[0,T]$. 
\end{theorem}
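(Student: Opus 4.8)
The plan is to show that the $r\times r$ matrix-valued function $\BM(t):=\BQ_r(t)^{*}\BP_r(t)$ satisfies a constant-coefficient matrix differential equation, and to read off both conclusions from its closed-form solution. First I would convert the two invariance relations in \eqref{leftrightInvariantSub} into pointwise equations. Writing $\Lop=-\frac{d}{dt}+\BA(t)$, the relation $\Lop[\BP_r]=\BP_r\BR_r$ becomes $\dot{\BP}_r(t)=\BA(t)\BP_r(t)-\BP_r(t)\BR_r$; writing $\Lop^{\star}=\frac{d}{dt}+\BA(t)^{*}$, the relation $\Lop^{\star}[\BQ_r]=\BQ_r\BR_r^{*}$ becomes $\dot{\BQ}_r(t)=-\BA(t)^{*}\BQ_r(t)+\BQ_r(t)\BR_r^{*}$, and conjugate-transposing this gives $\frac{d}{dt}\BQ_r(t)^{*}=\BR_r\BQ_r(t)^{*}-\BQ_r(t)^{*}\BA(t)$. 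The range hypotheses $\mathsf{Ran}(\BP_r(t)),\mathsf{Ran}(\BQ_r(t))\subset\mathsf{Dom}(\Lop)$ provide exactly the absolute continuity needed to apply the product rule almost everywhere.

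Next I would differentiate $\BM$. Substituting the two formulas above, the two copies of $\BQ_r(t)^{*}\BA(t)\BP_r(t)$ cancel, and one is left with
\begin{equation*}
\dot{\BM}(t)=\BR_r\BM(t)-\BM(t)\BR_r.
\end{equation*}
This cancellation is the heart of the argument: the only genuinely time-dependent data, $\BA(t)$, drops out precisely because a single $\BR_r$ drives the right-invariant equation for $\BP_r$ and (through $\BR_r^{*}$) the left-invariant equation for $\BQ_r$. Integrating this linear, constant-coefficient equation yields the closed form $\BM(t)=e^{t\BR_r}\,\BM(0)\,e^{-t\BR_r}$. Since $e^{t\BR_r}$ is invertible for every $t$, $\BM(t)$ is a similarity transform of $\BM(0)$; in particular $\BM(t)$ is nonsingular for every $t\in[0,T]$ if and only if $\BM(0)=\BQ_r(0)^{*}\BP_r(0)$ is nonsingular, which is the ``in particular'' assertion (and all that is needed to guarantee that the oblique projector $\BP_r(t)\big(\BQ_r(t)^{*}\BP_r(t)\big)^{-1}\BQ_r(t)^{*}$ is well defined for every $t$).

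The main obstacle is upgrading this similarity to the pointwise equality $\BM(t)=\BM(0)$, which by the closed form is equivalent to $\BM(0)$ commuting with $\BR_r$. To obtain this I would invoke the $T$-periodicity of $\BP_r$ and $\BQ_r$: evaluating the closed form at $t=T$ forces $\BM(0)=e^{T\BR_r}\BM(0)e^{-T\BR_r}$, i.e.\ $\BM(0)$ commutes with the reduced monodromy $\BW:=e^{T\BR_r}$. In the Floquet setting $\BR_r$ is a primary logarithm of $\BW$, hence a polynomial in $\BW$, so anything commuting with $\BW$ commutes with $\BR_r$ as well; this gives $[\BR_r,\BM(0)]=\mathbf{0}$ and therefore $\dot{\BM}\equiv\mathbf{0}$, i.e.\ $\BM(t)=\BM(0)$. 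I expect this logarithm/normalization step to be the delicate one: without it the argument only delivers the similarity $\BM(t)=e^{t\BR_r}\BM(0)e^{-t\BR_r}$ (pointwise constancy can fail when $\BR_r$ is a non-primary logarithm of $\BW$), though as noted the similarity already suffices for every downstream use.
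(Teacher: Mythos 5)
Your argument follows essentially the same route as the paper's proof. The only structural difference is in how the commutator equation $\dot{\BM}=\BR_r\BM-\BM\BR_r$ for $\BM(t)=\BQ_r(t)^{*}\BP_r(t)$ is obtained: you differentiate pointwise using the two strong-form ODEs, whereas the paper derives the same identity in weak form by pairing against arbitrary absolutely continuous test functions $\Bv,\Bw$ and moving $\Lop$ onto $\BQ_r\Bv$ via the adjoint relation. Both routes land on $\BM(t)=e^{t\BR_r}\BM(0)e^{-t\BR_r}$, invoke $T$-periodicity to conclude that $\BM(0)$ commutes with $e^{T\BR_r}$, and then must upgrade this to commutation with $e^{t\BR_r}$ for every $t$.

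Your caution about that last step is well placed, and you should know that the paper's own proof does not escape it: the paper writes $e^{t\BR_r}=(e^{T\BR_r})^{t/T}$ and asserts that commutation with $e^{T\BR_r}$ transfers to the fractional power, which is valid only when that fractional power is a primary matrix function of $e^{T\BR_r}$ --- precisely the condition you isolate. When two eigenvalues of $\BR_r$ differ by a nonzero integer multiple of $\ii\omega$ (with $\omega=2\pi/T$), the constancy claim genuinely fails. For instance, take $\BA(t)\equiv\mathrm{diag}(0,\ii\omega)$, $\BP_r(t)\equiv\BI$, so $\BR_r=\mathrm{diag}(0,\ii\omega)$; then the $T$-periodic function
\begin{equation*}
\BQ_r(t)=\begin{pmatrix}1&0\\ e^{\ii\omega t}&1\end{pmatrix}
\end{equation*}
satisfies $\Lop^{\star}[\BQ_r]=\BQ_r\BR_r^{*}$, yet $\BQ_r(t)^{*}\BP_r(t)=\begin{pmatrix}1&e^{-\ii\omega t}\\ 0&1\end{pmatrix}$ is not constant (here $e^{T\BR_r}=\BI$ commutes with everything, but $e^{t\BR_r}$ does not; note the ``in particular'' nonsingularity equivalence survives, since it needs only the similarity). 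So your proof is correct exactly where the paper's is, namely under the additional hypothesis --- implicit in the Floquet construction with a fixed branch of the logarithm, but absent from the theorem statement --- that no two eigenvalues of $\BR_r$ are congruent modulo $\ii\omega\mathbb{Z}$. Flagging that hypothesis rather than silently asserting that $(e^{T\BR_r})^{t/T}$ commutes is, if anything, an improvement on the published argument.
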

\begin{proof}
  Let $\mathbf{v}(t), \mathbf{w}(t)$ be a pair of absolutely continuous periodic vector-valued functions on $[0,T]$ with values in $\mathbb{C}^r$ having square integrable components.  Observe
$$
\begin{array}{c}
\mathscr{L}[\mathbf{P}_r\mathbf{w}](t)=\mathscr{L}[\mathbf{P}_r](t)\cdot \mathbf{w}(t) - \mathbf{P}_r(t)\dot{\mathbf{w}}(t) \\[3mm]
\mathscr{L}^{\star}[\mathbf{Q}_r\mathbf{v}](t)=\mathscr{L}^{\star}[\mathbf{Q}_r](t)\cdot \mathbf{v}(t) + \mathbf{Q}_r(t)\dot{\mathbf{v}}(t)
\end{array}
$$
Then,
\begin{align*}
    \big\langle \mathbf{v},&\,(\mathbf{R}_{r}[\mathbf{Q}_r^{*}\mathbf{P}_r])\, \mathbf{w}\big\rangle  = 
    \big\langle (\mathbf{Q}_r\mathbf{R}_{r}^{*})\ \mathbf{v},\, \mathbf{P}_r\,\mathbf{w}\big\rangle \\[2mm]
    =&\big\langle  \mathscr{L}^{\star}[\mathbf{Q}_r\mathbf{v}]- \mathbf{Q}_r\dot{\mathbf{v}} ,\mathbf{P}_r\,\mathbf{w}\big\rangle \\[2mm]
   = & \big\langle  \mathbf{Q}_r\mathbf{v},\mathscr{L}[\mathbf{P}_r\,\mathbf{w}]\big\rangle - \big\langle \mathbf{Q}_r\dot{\mathbf{v}} ,\mathbf{P}_r\,\mathbf{w}\big\rangle \\[2mm]
   & = \big\langle  \mathbf{Q}_r\mathbf{v},\mathscr{L}[\mathbf{P}_r] \cdot \mathbf{w} - \mathbf{P}_r\dot{\mathbf{w}}\big\rangle - \big\langle \dot{\mathbf{v}} ,[\mathbf{Q}_r^{*}\mathbf{P}_r]\,\mathbf{w}\big\rangle \\[2mm] 
     &\quad = \big\langle  \mathbf{Q}_r\mathbf{v},\mathbf{P}_r\mathbf{R}_{r} \mathbf{w}\big\rangle -  \big\langle  \mathbf{Q}_r\mathbf{v},\mathbf{P}_r\dot{\mathbf{w}}\big\rangle + 
     \big\langle \mathbf{v} ,\mbox{\small $\frac{d}{dt}$}\!\left[\mathbf{Q}_r^{\star}\mathbf{P}_r\,\mathbf{w}\right]\big\rangle \\[2mm]
   &\qquad = \big\langle  \mathbf{v},[\mathbf{Q}_r^{*}\mathbf{P}_r]\mathbf{R}_{r} \mathbf{w}\big\rangle   + \big\langle \mathbf{v} ,\mbox{\small $\frac{d}{dt}$}\![\mathbf{Q}_r^{\star}\mathbf{P}_r]\,\mathbf{w}\big\rangle 
\end{align*}
Since $\mathbf{v}(t), \mathbf{w}(t)$ are arbitrarily chosen, we must have 
\begin{equation*}
0=\frac{d}{dt}\![\mathbf{Q}_r^{*}\mathbf{P}_r]-\mathbf{R}_{r}\,[\mathbf{Q}_r^{\star}\mathbf{P}_r](t)+[\mathbf{Q}_r^{\star}\mathbf{P}_r](t)\,\mathbf{R}_{r}
\quad \implies \quad 
0=\frac{d}{dt}\left(e^{-t\mathbf{R}_{r}}[\mathbf{Q}_r^{\star}\mathbf{P}_r]e^{t\mathbf{R}_{r}}\right)
\end{equation*}
for all $t\in[0,T]$.  This can be integrated and rearranged to find $$
\mathbf{Q}_r(t)^{*}\mathbf{P}_r(t)\,e^{t\,\mathbf{R}_r}=e^{t\,\mathbf{R}_r}\,\mathbf{Q}_r(0)^{*}\mathbf{P}_r(0), \mbox{ for all } t\in[0,T].
$$    
Now since the elements of $\mathsf{Ran}(\mathbf{P}_r(t))$ and $\mathsf{Ran}(\mathbf{Q}_r(t))$ are $T$-periodic, we have in particular that $\mathbf{Q}_r(T)^{*}\mathbf{P}_r(T)=\mathbf{Q}_r(0)^{*}\mathbf{P}_r(0)$, and consequently $e^{T\mathbf{R}_{r}}$ commutes with $\mathbf{Q}_r(0)^{*}\mathbf{P}_r(0)$.  Since $e^{t\mathbf{R}_{r}}=\left(e^{T\mathbf{R}_{r}}\right)^{t/T}$, we also can assert that for all $t\in[0,T]$, $e^{t\mathbf{R}_{r}}$ commutes with $\mathbf{Q}_r(0)^{*}\mathbf{P}_r(0)$ as well, and so,  $\mathbf{Q}_r(t)^{*}\mathbf{P}_r(t)=\mathbf{Q}_r(0)^{*}\mathbf{P}_r(0)$.
\end{proof}

The condition \eqref{leftrightInvariantSub} is satisfied when the columns of $\mathbf{P}_r$ and $\mathbf{Q}_r$ span complementary right and left invariant subspaces, respectively, of dimension $r$ for $\mathscr{L}$. The eigenvalues of $\BR_r$ are a subset of the spectrum of $\mathscr{L}$, with algebraic and geometric multiplicities relative to $\BR_r$ bounding those relative to $\mathscr{L}$ from below. Theorem \ref{thm:invSubspBases} suggests that one could capture  benefits of a Floquet transform, at least in part, by identifying left/right invariant subspaces of $\mathscr{L}$ that could then serve as effective modeling subspaces producing port-isolated time-periodic reduced order models. This is developed further in the following theorem. We denote in the usual way $L_2(\mathbb{C}^n)=\left\{  \mathbf{z}(t)\in\mathbb{C}^n\left|\, \int_0^{\infty}\mathbf{z}(t)^*\mathbf{z}(t)\,dt<\infty\right.\right\}$ and recall that elements of the periodic function space $\mathscr{H}$ are extended from $[0,T]$ to $[0,\infty)$ via periodicity with no change in notation.  
\begin{theorem} \label{petrGalerkInvSub}
Suppose \eqref{eq:FOM} is asymptotically stable, $1\leq r\leq n$, and matrix-valued functions $\BP_r(t), \BQ_r(t)\in \bbC^{n\times r}$ are given that satisfy \eqref{leftrightInvariantSub} with $\mathsf{Ran}(\BP_r(t))\cup\mathsf{Ran}(\mathbf{Q}_r(t))\subset\mathsf{Dom}(\mathscr{L})$, and $\mathbf{M}_r=\mathbf{Q}_r(0)^{*}\mathbf{P}_r(0)$ nonsingular. Define left/right modeling subspaces as:
\begin{equation}
    \begin{aligned}
        \mathscr{P}_r=&\left\{\mathbf{P}_r(t)\mathbf{v}_r(t)\,\left|\,\begin{matrix}
        \mathbf{v}_r(t)\mbox{ absolutely continuous}\\
            \mbox{with } \mathbf{v}_r ,\dot{\mathbf{v}}_r\in L_2(\mathbb{C}^r)
        \end{matrix} \right.\right\}\quad\mbox{and}\\
        \mathscr{Q}_r&=\left\{\mathbf{Q}_r(t)\mathbf{w}_r(t)\,\left|\, \mathbf{w}_r \in L_2(\mathbb{C}^r).
\right.\right\}
    \end{aligned}
\end{equation}
Assuming that $u\in L_2(\mathbb{C})$,  consider a reduced-order model for \eqref{eq:FOM} determined by the Petrov-Galerkin condition: 
\begin{equation} \label{petGalCond}
\fbox{$\begin{aligned}
    \mbox{Find }\mathbf{p}(t)\in \mathscr{P}_r \mbox{ such that }
    -\dot{\mathbf{p}}+\mathbf{A}(t)\mathbf{p}(t)+\mathbf{b}(t)&u(t)\ \perp \  \mathscr{Q}_r\mbox{ in }L_2(\mathbb{C}^n),\\
    \mbox{The associated output is }y_r(t)=\mathbf{c}(t)^*\mathbf{p}(t)&
\end{aligned}$}
\end{equation}
Then the reduced model specified by \eqref{petGalCond} is equivalent to a port-isolated \textsc{ltp} system given by: 
\begin{equation} \label{ProjLTPModel}
 \fbox{$\begin{aligned}
    \dot{\mathbf{z}}_r(t) &= \mathbf{R}_r \mathbf{z}_r(t) + \left(\mathbf{M}_r^{-1}\mathbf{Q}_r^{*}(t)\mathbf{b}(t)\right) {u}(t)\\
		{y}_r(t) &= \left(\mathbf{c}(t)^* \mathbf{P}_r(t)\right) \mathbf{z}_r(t).
    \end{aligned}$}
\end{equation}
\end{theorem}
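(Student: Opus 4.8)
The plan is to substitute the ansatz coming from membership in $\mathscr{P}_r$, collapse the residual using the right-invariance of $\mathbf{P}_r$, and then convert the orthogonality condition into a pointwise differential equation with the help of Theorem \ref{thm:invSubspBases}. First I would write an admissible $\mathbf{p}\in\mathscr{P}_r$ as $\mathbf{p}(t)=\mathbf{P}_r(t)\mathbf{z}_r(t)$ with $\mathbf{z}_r,\dot{\mathbf{z}}_r\in L_2(\mathbb{C}^r)$; because $\mathbf{M}_r=\mathbf{Q}_r(0)^{*}\mathbf{P}_r(0)$ is nonsingular, Theorem \ref{thm:invSubspBases} forces $\mathbf{Q}_r(t)^{*}\mathbf{P}_r(t)\equiv\mathbf{M}_r$ and hence $\mathbf{P}_r(t)$ to have full column rank for every $t$, so this representation is unique and gives a bijection between $\mathscr{P}_r$ and the admissible coordinate functions $\mathbf{z}_r$. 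Differentiating yields $\dot{\mathbf{p}}=\dot{\mathbf{P}}_r\mathbf{z}_r+\mathbf{P}_r\dot{\mathbf{z}}_r$, and invoking the invariant-subspace identity $\mathscr{L}[\mathbf{P}_r]=-\dot{\mathbf{P}}_r+\mathbf{A}\mathbf{P}_r=\mathbf{P}_r\mathbf{R}_r$ from \eqref{leftrightInvariantSub} reduces the residual to
\[
-\dot{\mathbf{p}}+\mathbf{A}(t)\mathbf{p}+\mathbf{b}(t)u=\mathbf{P}_r(t)\big(\mathbf{R}_r\mathbf{z}_r(t)-\dot{\mathbf{z}}_r(t)\big)+\mathbf{b}(t)u(t).
\]

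Next I would impose orthogonality to $\mathscr{Q}_r$. Testing against $\mathbf{Q}_r\mathbf{w}_r$ for arbitrary $\mathbf{w}_r\in L_2(\mathbb{C}^r)$ and moving $\mathbf{Q}_r(t)^{*}$ inside, the crucial simplification is that Theorem \ref{thm:invSubspBases} replaces the time-varying product $\mathbf{Q}_r(t)^{*}\mathbf{P}_r(t)$ by the \emph{constant} matrix $\mathbf{M}_r$, which is exactly what decouples the $\dot{\mathbf{z}}_r$ and $\mathbf{R}_r\mathbf{z}_r$ terms cleanly. The orthogonality condition then becomes
\[
\int_0^\infty \mathbf{w}_r(t)^{*}\Big(\mathbf{M}_r\big(\mathbf{R}_r\mathbf{z}_r(t)-\dot{\mathbf{z}}_r(t)\big)+\mathbf{Q}_r(t)^{*}\mathbf{b}(t)u(t)\Big)\,dt=0
\]
for all $\mathbf{w}_r$. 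By the fundamental lemma of the calculus of variations the parenthetical integrand must vanish almost everywhere; multiplying through by $\mathbf{M}_r^{-1}$ produces precisely the reduced state equation of \eqref{ProjLTPModel}, while the output relation $y_r=\mathbf{c}^{*}\mathbf{p}=(\mathbf{c}^{*}\mathbf{P}_r)\mathbf{z}_r$ is immediate from the output definition in \eqref{petGalCond}. Reading the same chain of equalities in reverse supplies the converse implication, so the Petrov--Galerkin formulation and \eqref{ProjLTPModel} are genuinely equivalent.

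The step demanding the most care is the integrability and well-posedness bookkeeping that legitimizes these manipulations, and this is where the asymptotic-stability hypothesis is actually used. I would first confirm that the residual lies in $L_2(\mathbb{C}^n)$: periodicity controls $\mathbf{P}_r,\dot{\mathbf{P}}_r,\mathbf{Q}_r,\mathbf{b}$ on each period, $u\in L_2$, and $\mathbf{z}_r,\dot{\mathbf{z}}_r\in L_2$ by construction, so every pairing is finite and the interchange of the pointwise adjoint with the integral is justified. The real content behind the word ``equivalent'' is checking that the coordinate $\mathbf{z}_r$ extracted from the differential equation genuinely belongs to the trial space $\mathscr{P}_r$. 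Here I would note that the eigenvalues of $\mathbf{R}_r$ form a subset of the spectrum of $\mathscr{L}$, so asymptotic stability of \eqref{eq:FOM} forces $\mathbf{R}_r$ to be a stable matrix; this in turn guarantees that the reduced equation driven by the $L_2$ input $\mathbf{M}_r^{-1}\mathbf{Q}_r^{*}\mathbf{b}\,u$ admits a solution with $\mathbf{z}_r,\dot{\mathbf{z}}_r\in L_2(\mathbb{C}^r)$, i.e.\ one that lands back in $\mathscr{P}_r$. Without stability the Petrov--Galerkin problem could fail to be solvable inside the stated trial space, so verifying admissibility of the derived $\mathbf{z}_r$ is the genuine obstacle rather than the algebra itself.
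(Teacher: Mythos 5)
Your argument is correct and follows essentially the same route as the paper's proof: substitute $\mathbf{p}=\mathbf{P}_r\mathbf{z}_r$, collapse the residual via $\mathscr{L}[\mathbf{P}_r]=\mathbf{P}_r\mathbf{R}_r$, use Theorem \ref{thm:invSubspBases} to replace $\mathbf{Q}_r(t)^{*}\mathbf{P}_r(t)$ by the constant $\mathbf{M}_r$, and conclude the reduced ODE holds almost everywhere. Your additional discussion of the admissibility of the recovered $\mathbf{z}_r$ (via stability of $\mathbf{R}_r$) is a sensible elaboration of a point the paper leaves implicit, but it does not change the substance of the argument.
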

\begin{proof} Since $u\in L_2(\mathbb{C})$, 
 we have $-\dot{\mathbf{p}}(t)+\mathbf{A}(t)\mathbf{p}(t)+\mathbf{b}(t)u(t)\in L_2(\mathbb{C}^n)$ for any $\mathbf{p}(t)\in \mathscr{P}_r$.  A solution to \eqref{petGalCond} will then exist if and only if $\mathbf{p}(t)=\mathbf{P}_r(t)\mathbf{z}_r(t)$ satisfies the orthogonality condition in \eqref{petGalCond} for some $\mathbf{z}_r\in L_2(\mathbb{C}^r)$, i.e., if and only if for all $\mathbf{w}_r \in L_2(\mathbb{C}^r)$ we have
    \begin{align*}
0=&\int_0^{\infty}\left(\mathbf{Q}_r(t)\mathbf{w}_r(t)\right)^*\left(-(\mathbf{P}_r(t) \mathbf{z}_r(t))^{\bullet}+\mathbf{A}(t)\mathbf{P}_r(t) \mathbf{z}_r(t)+\mathbf{b}(t)u(t)\right)\,dt \\
&=\int_0^{\infty}\left(\mathbf{Q}_r(t)\mathbf{w}_r(t)\right)^*\left(\mathscr{L}[\mathbf{P}_r](t)\,\mathbf{z}_r(t)-\mathbf{P}_r(t) \dot{\mathbf{z}}_r(t)+\mathbf{b}(t)u(t)\right)\,dt \\
&=\int_0^{\infty}\mathbf{w}_r(t)^*\left(\mathbf{Q}_r(t)^*\mathbf{P}_r(t)\left(\mathbf{R}_r \mathbf{z}_r(t) -\dot{\mathbf{z}}_r(t)\right) +\mathbf{Q}_r(t)^*\mathbf{b}(t)u(t)\right)\,dt \\
&=\int_0^{\infty}\mathbf{w}_r(t)^*\mathbf{M}_r\left(\mathbf{R}_r \mathbf{z}_r(t) -\dot{\mathbf{z}}_r(t) +\mathbf{M}_r^{-1}\mathbf{Q}_r(t)^*\mathbf{b}(t)u(t)\right)\,dt. 
   \end{align*}
   This is equivalent to  \eqref{ProjLTPModel} being satisfied $t$-almost everywhere.
\end{proof}

The success of \eqref{ProjLTPModel} as a reduced-order \textsc{ltp} model surrogate for the original system \eqref{eq:FOM} will depend on the extent to which the system response is captured by the response of a subsystem with dynamics restricted to the invariant subspace of $\mathscr{L}$ distinguished by \eqref{leftrightInvariantSub}.  This leads us to consider strategies for identifying effective invariant subspaces of $\mathscr{L}$ motivated by analogous successful methods for reducing \textsc{lti} systems; we  pattern our approach after the \emph{Dominant Pole Algorithm} of Rommes \cite{rommesDPA}.

\section{Determining Effective Invariant Subspaces
}\label{se:ltp2lti}

Consider the Floquet-transformed input/output system in \eqref{CBTpostFloquet}. For convenience in what follows we will assume that $\mathbf{R}$ is diagonalizable and the diagonalizing similarity transformation has been applied and absorbed into $\mathbf{P}(t)$ with no change in notation; $\mathbf{R}=\boldsymbol{\Lambda}$ is diagonal.  We denote the Floquet transformed input/output ports as $\bhat(t):=\BQ(t)^{*}\Bb(t)$ and $\chat(t):=\BP(t)^*\Bc(t)$. If we assume they are band limited (i.e represented by a finite Fourier series), then their Fourier expansions are given by:   
 \begin{equation}
\label{eq:FourB}
\begin{aligned}
\bhat(t):=\sum_{k=-K}^K \Bhat_{:,k} e^{\ii k t} =\begin{bmatrix}
    \Bhat_{:,-K} & \dots & \Bhat_{:,K}
\end{bmatrix}
\begin{bmatrix}
    e^{-\ii K\omega t} \\
    \vdots \\
    e^{\ii K\omega t}
\end{bmatrix}=:
\Bhat\, \BPsi_{-K:K}(t),
\end{aligned}
\end{equation}
\begin{equation}
\label{eq:FourC}
\begin{aligned}
\chat(t):=\sum_{k=-K}^K \Chat_{:,k} e^{\ii k\omega t}=\begin{bmatrix}
    \Chat_{:,-K} & \dots & {\Chat}_{:,K}
\end{bmatrix}
\begin{bmatrix}
    e^{-\ii K\omega t} \\
    \vdots \\
    e^{\ii K\omega t}
\end{bmatrix}
=:
{\Chat}\,\BPsi_{-K:K}(t).
\end{aligned}
\end{equation}

We refer to $K$ as the \emph{Fourier depth}, denoting the highest frequency represented in these expansions. For the rest of the paper we will make use of the vector of phase basis functions, which we define here:
\begin{definition}[Phase Basis Functions]\label{def:psi}
    For a given system base frequency, $\omega$, we define phase functions  $\psi_\ell = e^{\ii \ell \omega t}$ for $\ell=-2K,\ldots,2K$. The \emph{phase basis vector}, $\BPsi$, is then defined as 
    \begin{equation}
        \BPsi(t) = \begin{bmatrix}
            \psi_{-2K}(t) & \hdots & \psi_{2K}(t)
        \end{bmatrix}^T = \begin{bmatrix}
            e^{-\ii 2K\omega t} & \hdots & e^{\ii 2K\omega t}
        \end{bmatrix}^T
    \end{equation}
    We set the default range of $\BPsi$ to $-2K$ to $2K$, as that covers most purposes in this paper. If a different range is used, we specify it explicitly in the subscript, as in \eqref{eq:FourB} and \eqref{eq:FourC}.    
\end{definition}

We may lift the system to an \textsc{lti-mimo} (Linear Time-Invariant, Multiple Input/ Multiple Output) representation by absorbing $\BPsi_{-K:K}$ into our input and output. That is, letting $\widehat{\Bu}(t):= \BPsi_{-K:K}(t)u(t)$ and $y(t) = \BPsi_{-K:K}(t)^*\,\widehat{\By}(t)$, we recover an imbedded \textsc{lti} representation, as shown in Figure \ref{fig:LTIext}: 

\begin{figure}[ht!]
  \centering
\includegraphics[width=1.05\linewidth]{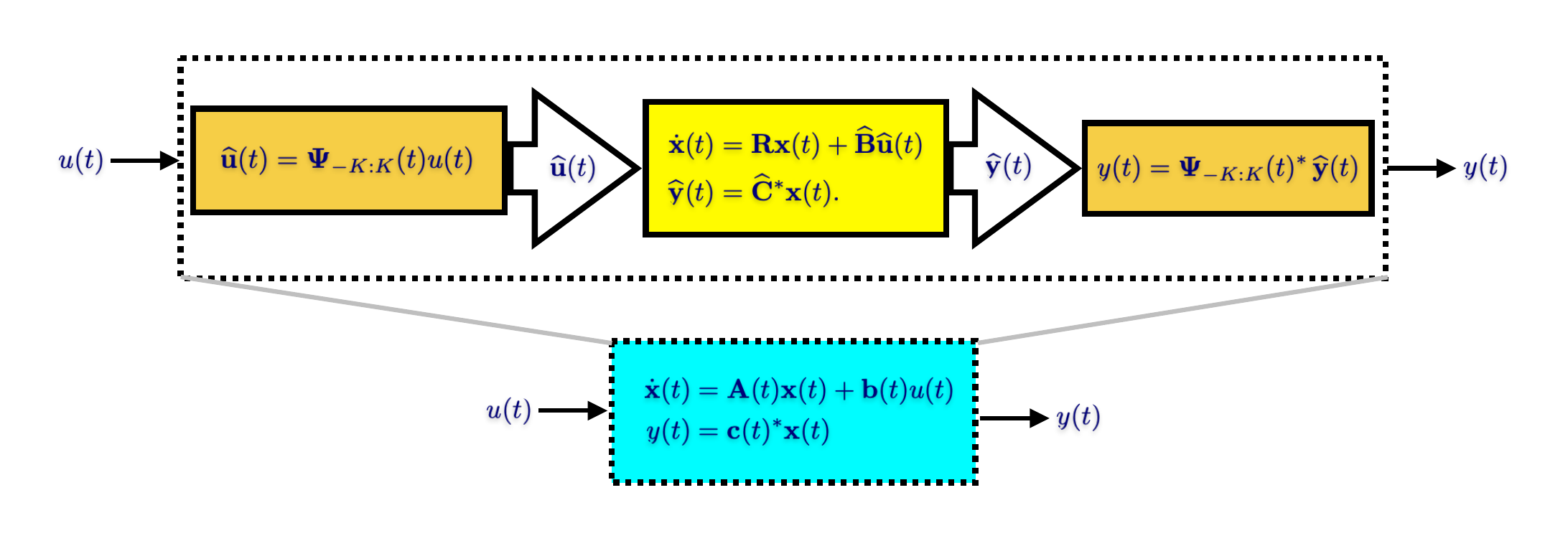}
\caption{Mapping an \textsc{LTP} system to an \textsc{LTI} extension}
\label{fig:LTIext}
    \end{figure}

We refer to the intermediate \textsc{lti} \textsc{mimo} system, $\left[ \begin{array}{c|c}
\BR &  \widehat{\BB} \\  \hline
\widehat{\BC}^* & 
\end{array} \right]$, an {\it{\textsc{lti} extension}}. The transfer function of the \textsc{lti} extension is $\BH_{ext}(s) = \widehat{\BC}^*\left(s\BI-\BR\right)^{-1}\widehat{\BB}$. For \textsc{lti} systems, the $H_\infty$ norm is the supremum of the transfer function's spectral norm along the imaginary axis: $\|\BH_{ext}\|_{H_\infty}:= \sup_{s\in \bbR}\|\BH_{ext}(\ii s)\|_{2}$. This norm is particularly useful because it provides a bound on the system's output energy: if $\widehat{\Bu}\in L_2$, then $\|\widehat{\By}\|_{L_2} \leq \|\BH_{ext}\|_{H_\infty}\|\widehat{\Bu}\|_{L_2}$. Hence, we can use the $H_\infty$ norm of the difference of transfer functions to evaluate the quality of a reduced-order model. In particular, if $\BH_{ext,r}(s)$ is the transfer function for a reduced-order model of the \textsc{lti} extension, then 
\begin{equation}
\label{eq:Herr}
    \|\widehat{\By}-\widehat{\By}_r\|_{L_2} \leq \|\BH_{ext}-\BH_{ext,r}\|_{H_\infty}\|\widehat{\Bu}\|_{L_2}.
\end{equation}
Recall that the actual output is recovered from the lifted signal via $y(t)-y_r(t) =  \BPsi_{-K:K}(t)^*(\widehat{\By}(t)-\widehat{\By}_r(t))$. Applying the triangle and Cauchy-Schwartz inequalities yields the bound
\begin{equation}
\label{eq:yerr}
    \|y-y_r\|_{L_2}^2 
    =
    \left\|\sum_{k=-K}^K e^{-\ii k\omega t}\left(\widehat{\By}_k-\widehat{\By}_{r,k}\right)\right\|_{L_2}^2 
    \leq
    \|\widehat{\By}-\widehat{\By}_r\|_{L_2}^2.
\end{equation}
Combining \eqref{eq:Herr} and \eqref{eq:yerr}, we conclude that model reduction techniques which produce accurate approximations to $\BH_{ext}(s)$ in $H_\infty$ will also produce accurate approximations to $y(t)$ in $L_2$. This relation serves as a theoretical basis for ranking different eigentriples of $\Lop$. To proceed, we express $\BH_{ext}(s)$ in pole-residue form and rank the terms based on how `dominant' they are.

\subsection{Dominant Pole Truncation}
\label{se:DPexpectations}
With diagonal $\BR$, say $\BR = \BLambda=\mathsf{diag}(\lambda_1,\lambda_2,\ldots,\lambda_n)$, we have
\begin{equation}
\label{eq:Hpoleres}
\BH_{ext}(s)=\widehat{\BC}^*(s\BI-\BLambda)^{-1}\widehat{\BB} =\sum_{j=1}^n \frac{\Theta_j}{s-\lambda_j}, \quad \text{where } \Theta_j=(\widehat{\BC}^*\Be_j)(\Be_j^*\widehat{\BB}).
\end{equation}
Dominant pole truncation is a model reduction technique that takes the terms with the largest individual norms and truncates the rest. With the $H_{\infty}$ norm, terms are ordered by $\frac{\|\Theta_j\|_{2}}{|\re(\lambda_j)|}$. As a consequence of Parseval's relation and the norm of outer products, $\|\Theta_j\|_2=\|\Bp_j(t)^*\Bc(t)\|_{L_2}\|\Bq_j(t)^*\Bb(t)\|_{L_2},$ where $\Bp_j(t)$ and $\Bq_j(t)$ are the $j^{th}$ columns of $\BP(t)$ and $\BQ(t)$, respectively. Therefore, we can establish a measure of importance -- or \emph{degree of dominance} -- for different eigentriples according to $\BH_{ext}$:  
\begin{equation}\label{eq:degdom}
\begin{aligned}
\BH_{ext}\text{ degree of dominance of }& \lambda_j = \frac{\|\Chat_{j,:}\|_{2}\|\Bhat_{j,:}\|_{2}}{|\re(\lambda_j)|}\\[2mm]
=&\frac{\|\Bp_j(t)^*\Bc(t)\|_{L_2}\|\Bq_j(t)^*\Bb(t)\|_{L_2}}{|\re(\lambda_j)|}.  
\end{aligned}
\end{equation}

Dominant pole truncation is a heuristic model and does not give any optimality guarantees. Its effectiveness depends on the location and residues of the poles. Taking $\BH_{ext,r}$ to be the transfer function induced by dominant pole truncation, 
\begin{equation}
    \|\BH_{ext}-\BH_{ext,r}\|_{H_\infty} \leq \sum_{j=r+1}^n\frac{\|\Theta_j\|_{2}}{|\re(\lambda_j)|}.
\end{equation}
This inequality suggests that the technique will be most effective when a cluster of poles are located close to the imaginary axis, while the remainder are significantly farther to the left in the complex plane.\\
Despite its heuristic nature, dominant pole truncation offers two practical advantages. First, it retains the poles of the original system, meaning that poles retain their physical interpretation and the reduced order model preserves stability.  Second, and more significantly in our setting, it aligns naturally with the structure of invariant subspaces. This makes it particularly suitable for \textsc{ltp} systems: performing a partial Floquet transform using the $r$ most dominant eigentriples yields the same reduced system as first computing the full Floquet transform and then applying dominant pole truncation (see Theorem~\ref{petrGalerkInvSub}). This efficient path to a reduced order model makes dominant pole truncation a fitting strategy in the context of time-periodic model reduction.

\subsection{An $\mathbf{H}_{ext}$ Proxy and the Harmonic Transfer Function}

As we discuss in \S\ref{se:DPA}, tools to identify dominant modes exist for \textsc{lti} systems, however these methods presume access to a transfer function. A challenge in this setting is that $\BH_{ext}$ can only be evaluated after performing a full Floquet transform. Consequently, it is necessary to replace $\BH_{ext}$ with a proxy function -- an accessible function with similar pole and residue characteristics. A natural starting point is to translate the parameters of an \textsc{lti} transfer function to their most intuitive \textsc{ltp} counterparts. For an \textsc{ltp-siso} system, \eqref{eq:FOM}, one might treat $\Cg_0(s) = \left\langle\Bc,(s-\Lop)^{-1}[\Bb]\right\rangle$ as a kind of transfer function. Unfortunately, this proxy has an incompatible pole-residue structure and may misclassify dominant poles as removable singularities. Nonetheless, further refinements led to a useful connection to the \emph{Harmonic Transfer Function} (\textsc{htf}) \cite{wereleyLinearTimePeriodic1991,wereleyAnalysisControlLinear}. Given an \emph{exponentially modulated periodic} input, 
\begin{equation}
    u(t) = e^{st}\sum_{m} u_m e^{\ii m\omega t},
\end{equation}
the steady state output of \eqref{eq:FOM} must be of the same form. The \textsc{htf}, $\CG(s)$, relates the input harmonics to the output harmonics. That is, 
\begin{equation}
\label{eq:yGu}
    y(t) = e^{st} \sum_{\ell}\left(\sum_{m}\CG_{\ell,m}(s)u_m \right)e^{\ii \ell \omega t}. 
\end{equation}
In what follows, we show that $\Cg_0$ equals the central component of the \textsc{htf} and similar expressions exist for all other components.  This provides (to the authors' knowledge) a novel representation of the \textsc{htf}. In any case, a proxy function, $\Cg$, is produced which represents the output harmonics when the input is a complex exponential signal, $u(t) = u_0e^{st}$. Beyond physical interpretability,  $\Cg$ also has a similar pole/residue structure as $\BH_{ext}$:


\begin{lemma}
    \label{lem:bcheck}
    Let $\bhat(t)$ be band limited with Fourier depth no greater than $K$. Then
        $$
        (s\bbI-\Lop)^{-1}[\Bb](t) = \BP(t)\sum_{k=-K}^K((s+\ii \omega k)\BI - \BR)^{-1}\Bhat_{:,k}e^{\ii k\omega  t}
        $$
    \end{lemma}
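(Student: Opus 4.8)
The plan is to verify the stated expression directly: exhibit a candidate function, check it lies in $\mathsf{Dom}(\Lop)$, and confirm that $(s\bbI-\Lop)$ sends it to $\Bb$; uniqueness of the resolvent then closes the argument. First I would record two structural facts. Since $\BQ(t)=\BP(t)^{-*}$, the definition $\bhat(t)=\BQ(t)^{*}\Bb(t)$ gives $\bhat=\BP^{-1}\Bb$, so that $\Bb(t)=\BP(t)\bhat(t)=\BP(t)\sum_{k=-K}^{K}\Bhat_{:,k}e^{\ii k\omega t}$, using the band-limited expansion \eqref{eq:FourB}. Second, the Floquet relation \eqref{FloqBVP} reads $\dot{\BP}(t)=\BA(t)\BP(t)-\BP(t)\BR$; this is the identity I will use to commute the derivative past $\BP(t)$.

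Next I would introduce the candidate
$$\Bg(t):=\BP(t)\sum_{k=-K}^{K}\Bv_k\,e^{\ii k\omega t},\qquad \Bv_k:=\big((s+\ii\omega k)\BI-\BR\big)^{-1}\Bhat_{:,k},$$
and check admissibility. The block $(s+\ii\omega k)\BI-\BR$ is invertible exactly when $s+\ii\omega k\notin\sigma(\BR)$; by the imaginary-shift structure recorded in \eqref{FloqBVPtransl}, $\sigma(\Lop)=\{\rho+\ii\omega m:\rho\in\sigma(\BR),\,m\in\mathbb{Z}\}$, so any $s$ in the resolvent set of $\Lop$ makes all $2K+1$ inverses well-defined. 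Because $\BP(t)$ is $T$-periodic, absolutely continuous, with $\dot{\BP}\in\mathscr{H}$, and each $e^{\ii k\omega t}$ is $T$-periodic, the finite sum $\Bg$ inherits these properties and hence lies in $\mathsf{Dom}(\Lop)$.

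The computational heart is to apply $(s\bbI-\Lop)[\Bg]=s\Bg+\dot{\Bg}-\BA\Bg$. Differentiating by the product rule produces a term $\dot{\BP}\sum_k\Bv_k e^{\ii k\omega t}$ and a term $\BP\sum_k \ii k\omega\,\Bv_k e^{\ii k\omega t}$; substituting $\dot{\BP}=\BA\BP-\BP\BR$ makes the $\BA(t)\BP(t)$ contribution cancel against $-\BA(t)\Bg$, leaving
$$(s\bbI-\Lop)[\Bg](t)=\BP(t)\sum_{k=-K}^{K}\big((s+\ii\omega k)\BI-\BR\big)\Bv_k\,e^{\ii k\omega t}=\BP(t)\sum_{k=-K}^{K}\Bhat_{:,k}e^{\ii k\omega t}=\Bb(t),$$
where the middle equality is the defining property of $\Bv_k$. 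Since $s$ lies in the resolvent set, $(s\bbI-\Lop)$ is a bijection on $\mathsf{Dom}(\Lop)$, so $\Bg=(s\bbI-\Lop)^{-1}[\Bb]$, which is the assertion.

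I expect the main obstacle to be bookkeeping rather than depth: tracking the correct left-to-right ordering of the periodic matrix $\BP(t)$, the constant vectors $\Bv_k$, and the scalar phases $e^{\ii k\omega t}$ as the product rule generates the $\ii k\omega$ terms that combine with the $s\BI-\BR$ block. The band-limited hypothesis enters only to keep the sum finite, so no convergence argument is needed and $\Bg$ is manifestly in the domain; the hypothesis $s\notin\sigma(\Lop)$ is the one used silently to guarantee invertibility of each shifted block.
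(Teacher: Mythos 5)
Your proof is correct and is essentially the paper's argument run in reverse: the paper sets $(s\bbI-\Lop)^{-1}[\Bb]=\BP\widecheck{\Bb}$ and uses the Floquet relation to reduce to the constant-coefficient equation $\widehat{\Bb}=(s\BI-\BR)\widecheck{\Bb}+\dot{\widecheck{\Bb}}$, which it then solves mode-by-mode in Fourier space, whereas you guess the resulting candidate and verify that $(s\bbI-\Lop)$ maps it to $\Bb$ via the same identity $\dot{\BP}=\BA\BP-\BP\BR$. Your added checks (membership in $\mathsf{Dom}(\Lop)$ and invertibility of each shifted block $(s+\ii\omega k)\BI-\BR$ for $s$ in the resolvent set) are correct details that the paper leaves implicit.
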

\begin{proof}
Recall that $\bhat(t) = \BP(t)^{-1}\Bb(t)$, $\chat(t) = \BP(t)^*\Bc(t)$, and $\Bhat$, $\Chat$ are their Fourier coefficients (\S \ref{se:ltp2lti}). Hence, $\Bb(t) = \BP(t)\bhat(t)$, implying
\begin{equation}
\Bb(t)=\BP(t)\widehat{\Bb}(t)\\
=(s\bbI-\Lop)\BP(t)\widecheck{\Bb}(t)\\
=\BP(t)\left[(s\BI-\BR)\widecheck{\Bb}(t)+\dot{\widecheck{\Bb}}(t)\right].
\end{equation}
By solving 
$\widehat{\Bb}(t) = (s\BI-\BR)\widecheck{\Bb}(t)+\dot{\widecheck{\Bb}}(t)$
for $\widecheck{\Bb}(t)\in\mathsf{Dom}(\mathscr{L})$ and taking a Fourier expansion we get the desired expression. 
\end{proof}

\begin{remark}
\label{rm:depth}
By repeating these arguments we can find a similar expression for $(s\bbI-\Lop)^{-*}[\Bc](t)$. Note then that we can find the Fourier depth, $K$, by counting the number of Fourier coefficients in $(s\bbI-\Lop)^{-1}[\Bb](t)$ and $(s\bbI-\Lop)^{-*}[\Bc](t)$.
\end{remark}

One may now see that the poles of $\Cg_0(s)$ -- and by extension, the eigenvalues of $\Lop$ -- are the eigenvalues of $\BR$ shifted by integer multiples of $\ii\omega$. Indeed, recall from \S\ref{sec2_problemSetting} that if $\Bp(t)$ and $\Bq(t)$ are right and left eigenfunctions associated with $\lambda$, then $\Bp(t)e^{-\ii k\omega t}$ and $\Bq(t)e^{-\ii k\omega t}$ are right and left eigenfunctions, respectively, associated with the shifted eigenvalue, $\lambda+\ii k$. The cancellation of the phase shifts implies that the specific branch of an eigentriple does not matter for the purposes of the Petrov-Galerkin projection, however Theorem \ref{thm:g0_wer} reveals that the presence of these shifts makes it difficult to perfectly replicate the degree of dominance asserted by $\BH_{ext}$. 

\begin{theorem} Let $\bhat(t)$ and $\chat(t)$ be band limited with Fourier depth no greater than $K$. Then
\label{thm:g0_wer}
    $$
    \Cg_{\ell}(s) := \left\langle\Bc \psi_\ell,(s\bbI-\Lop)^{-1}[\Bb]\right\rangle = \sum_{k=-K}^K \widehat{\BC}_{:,k-\ell}{}^* ((s+\ii k\omega)\BI-\BR)^{-1}\widehat{\BB}_{:,k}.
    $$
\end{theorem}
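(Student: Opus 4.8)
The plan is to reduce the claim to a single explicit Fourier computation, using Lemma~\ref{lem:bcheck} as the analytic engine and the orthogonality of the phase functions to collapse the resulting double sum onto a single index.

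First I would unfold the definition of $\Cg_\ell$. Writing the $\mathscr{H}$-inner product as $\langle\Bw,\Bv\rangle=\frac1T\int_0^T\Bw(t)^*\Bv(t)\,dt$ and observing that the first slot $\Bc\psi_\ell$ enters through its conjugate transpose $\Bc(t)^*e^{-\ii\ell\omega t}$, this gives
$$
\Cg_\ell(s)=\frac1T\int_0^T \Bc(t)^*\,e^{-\ii\ell\omega t}\,(s\bbI-\Lop)^{-1}[\Bb](t)\,dt.
$$
Next I would substitute the closed form supplied by Lemma~\ref{lem:bcheck},
$$
(s\bbI-\Lop)^{-1}[\Bb](t)=\BP(t)\sum_{k=-K}^K\bigl((s+\ii\omega k)\BI-\BR\bigr)^{-1}\Bhat_{:,k}\,e^{\ii k\omega t},
$$
which introduces the factor $\Bc(t)^*\BP(t)$ into the integrand.

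The key algebraic step is the identity $\Bc(t)^*\BP(t)=\chat(t)^*$, obtained simply by conjugate-transposing the definition $\chat(t)=\BP(t)^*\Bc(t)$. I would then expand $\chat(t)^*=\sum_{m=-K}^K\Chat_{:,m}^*\,e^{-\ii m\omega t}$ and gather the scalar phases, so that each term of the integrand is a $t$-independent quantity $\Chat_{:,m}^*\bigl((s+\ii\omega k)\BI-\BR\bigr)^{-1}\Bhat_{:,k}$ multiplied by a pure exponential $e^{\ii(k-m-\ell)\omega t}$. Because the band-limited hypothesis makes both Fourier sums finite, the interchange of summation and integration is unproblematic.

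Finally, orthogonality of the phases, $\frac1T\int_0^T e^{\ii j\omega t}\,dt=\delta_{j,0}$ with $\omega=2\pi/T$, annihilates every term except those with $k-m-\ell=0$, i.e.\ $m=k-\ell$, collapsing the double sum over $(k,m)$ into exactly $\sum_{k=-K}^K\Chat_{:,k-\ell}^*\bigl((s+\ii\omega k)\BI-\BR\bigr)^{-1}\Bhat_{:,k}$, the asserted formula. I do not anticipate a genuine obstacle: Lemma~\ref{lem:bcheck} carries the analytic weight and the rest is bookkeeping. The one point worth flagging is that $m=k-\ell$ may fall outside $[-K,K]$; there $\Chat_{:,k-\ell}$ is taken to vanish since $\chat$ has Fourier depth at most $K$, which is the convention implicit in the statement. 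The only real care required is to track the conjugations and the index shift in the exponentials correctly, so that the surviving coefficient is $\Chat_{:,k-\ell}$ rather than, say, $\Chat_{:,k+\ell}$.
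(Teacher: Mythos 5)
Your proposal is correct and follows essentially the same route as the paper's proof: apply Lemma~\ref{lem:bcheck}, use the definition $\chat(t)=\BP(t)^*\Bc(t)$ (equivalently, $\Bc=\BQ\chat$ with $\BQ^*=\BP^{-1}$) to cancel $\BP$ from the integrand, then expand $\chat$ in its Fourier series and invoke orthogonality of the phase functions to collapse the double sum to $m=k-\ell$. Your index bookkeeping and the convention that $\Chat_{:,k-\ell}$ vanishes outside the band are both consistent with the stated result.
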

\begin{proof}
    Recall that $\psi_\ell(t) = e^{\ii \ell \omega t}$ and the inner product is given by
    \begin{equation}
    \label{eq:ip_def}
    \left\langle\Bc \psi_\ell,(s-\Lop)^{-1}[\Bb]\right\rangle = \frac1T \int_0^T e^{-\ii \ell \omega t}\Bc(t)^*\,(s-\Lop)^{-1}[\Bb](t)dt.
    \end{equation}
    The theorem follows from these steps: (1) Apply Lemma \ref{lem:bcheck} to $\langle\Bc e^{\ii \ell \omega t},(s-\Lop)^{-1}[\Bb]\rangle$.  (2) Write $\Bc$ as $\Bc(t)=\BQ(t)\chat(t)$ and use the fact that $\BQ(t)^* = \BP(t)^{-1}$ to remove $\BP$ and $\BQ$ from the inner product. (3) Take the Fourier expansion of $\chat$ and use the orthogonality of the phase basis functions to evaluate the inner product.  
\end{proof}

The entries of $\CG$ are described by 
\begin{equation}
\label{eq:werHTF_lm}
    \CG_{\ell,m}(s) = \sum_{k=-K}^K \widehat{\BC}_{:,k-\ell}{}^* ((s+\ii k\omega)\BI-\BR)^{-1}\widehat{\BB}_{:,k-m}.
\end{equation}
Theorem \ref{thm:g0_wer} shows that $\Cg_\ell$ describes the $\ell^{th}$ component of central column of the \textsc{htf} ($\CG_{\ell,0}(s) = \Cg_\ell(s)$). In particular, $\Cg_0(s)$ expresses $\CG_{0,0}(s)$. For diagonal $\BR = \BLambda$, the pole residue form of $\Cg_0$ is: 
\begin{equation}
\label{eq:g0_pr}
\Cg_0(s) = \sum_{j=1}^n\sum_{k=-K}^K \frac{\overline{\Chat}_{j,k}\Bhat_{j,k}}{s-\lambda_j+\ii k}.
\end{equation}
Investigating this expression, one can see that if $\chat_j$ and $\bhat_j$ share no common harmonics, then the corresponding $\lambda_j$ and all of its shifts will be removable singularities of $\Cg_0$. This is true even for highly dominant poles: they will be invisible to $\Cg_0$ if the Floquet transformed ports have mismatched harmonics. \\
Note that what may be a removable singularity for $\Cg_0$ could be a dominant pole for a different $\Cg_{\ell}$. However, this observation does not fix the problem -- any scalar component will suffer from a potential mismatch of harmonics. To address this, we broaden our perspective: we stack all the components, $\Cg_\ell$, into a vector -- effectively composing the principal column of the \textsc{htf}, $\CG_{:,0}$. 
\begin{definition}[Principal Harmonics Vector (PHV)]
    Let $\bhat$ and $\chat$ be band limited with Fourier depth $K$, and take $\BPsi$ from Def \ref{def:psi}. We define the \emph{Principal Harmonics Vector} by
$$
\Cg(s):= \langle \Bc \BPsi^T,(s\bbI-\Lop)^{-1}[\Bb]\rangle = \frac{1}{T} \int_0^T \overline{\BPsi}(t)\, \Bc(t)^*(s\bbI-\Lop)^{-1}[\Bb](t)dt.
$$
\end{definition}
Observe that if the system has Fourier depth $K$, then $\CG_{\ell,0} = 0$ for $|\ell| > 2K$, and so this expression accounts for all non-zero components in the principal column of the \textsc{htf}, $\CG_{:,0}$. Examining \eqref{eq:yGu}, one sees that $\Cg$ captures the harmonic content of the output signal given a complex exponential, $u=e^{st}$, as input. Thus, $\Cg$ reflects a sensitivity to dominant modes whose energy is present anywhere in the frequency domain. \\

We now turn to the pole-residue form of $\Cg$ to better understand which terms are dominant. In \eqref{eq:g0_pr}, we expanded the pole-residue form of the central element of $\Cg$. The pole-residue for any component is simlar, $\Cg_\ell (s) = \sum_{j=1}^n\sum_{k=-K}^K \frac{\overline{\Chat}_{j,k-\ell}\Bhat_{j,k}}{s-\lambda_j+\ii k}$. Concentrating on a fixed value for $k$, the contributions from the $k^{th}$ and adjacent components of the PHV are
\begin{equation}
    \Cg_k = \sum_{j=1}^n \frac{\overline{\Chat}_{j,0}\Bhat_{j,k}}{s-\lambda_j+\ii k}, \quad \Cg_{k-1} = \sum_{j=1}^n \frac{\overline{\Chat}_{j,1}\Bhat_{j,k}}{s-\lambda_j+\ii k}, \quad  \Cg_{k+1} = \sum_{j=1}^n \frac{\overline{\Chat}_{j,-1}\Bhat_{j,k}}{s-\lambda_j+\ii k}.
\end{equation}
Extending this pattern, the $2K+1$ components of $\Cg$ centered around $k$ are given by
\begin{equation}
    \begin{bmatrix}
        \Cg_{k-K}\\
        \vdots \\
        \Cg_{k+K}
    \end{bmatrix}
    =
    \sum_{j=1}^n
    \frac{\Bhat_{j,k}}{s-\lambda_j+\ii k}\begin{bmatrix}
        \overline{\Chat}_{j,K}\\
        \vdots \\
        \overline{\Chat}_{j,-K}
    \end{bmatrix} = \sum_{j=1}^n \mathtt{flip}(\Chat_{j,:})^*\frac{\Bhat_{j,k}}{s-\lambda_j+\ii k},
\end{equation}
where \texttt{flip} reverses the order of the entries in the row vector $\Chat_{j,:}$. \\
This accounts for $2K+1$ of the $4K+1$ components in $\Cg(s)$. The remaining components are zero due to band limitation: $\Chat_{j,\widehat{k}} = 0$ for $|\widehat{k}|>K$. Importantly, neither the zero padding nor the \texttt{flip} operation alters the norm of each residue, and so dominance is ordered by:
\begin{equation}
\label{eq:gdom}
\Cg\text{ degree of dominance of } \lambda_j = \frac{\displaystyle \max_{k}\|\widehat\BB_{j,k}\,\overline{\widehat\BC}_{j,:}\|_2}{|\re{\lambda_j}|} =\frac{\displaystyle \|\overline{\widehat\BC}_{j,:}\|_2\|\widehat\BB_{j,:}\|_{\infty}}{|\re{\lambda_j}|}.  
\end{equation}

While \eqref{eq:gdom} does not exactly match \eqref{eq:degdom}, we care less about the actual value for the degrees of dominance and more about whether the two notions produce a similar ordering of poles. For this purpose we find \eqref{eq:gdom} satisfactory. \\

Beyond producing a more agreeable notion of dominance, the PHV also has connections to the $H_2$ norm for \textsc{ltp} systems.  Magruder et al. \cite{magruderLinearTimeperiodicDynamical2018} exemplified how the $H_2$ norm for \textsc{ltp} systems (defined in terms of the impulse response) can be expressed by treating $\Cg$ as an \textsc{lti} transfer function and taking its $H_2$ norm. This connection further supports the utility of the PHV.

\section{The Dominant Pole Algorithm}
\label{se:DPA}

We have conceptualized the `importance' of various eigenmodes -- \eqref{eq:degdom} and \eqref{eq:gdom} -- now we discuss our method of finding them. The Dominant Pole Algorithm (\textsc{dpa}) was originally developed to compute dominant eigenmodes of \textsc{lti} systems by applying Newton’s method to the reciprocal of the system's transfer function \cite{martinsComputingDominantPoles1996,rommesDPA}. In the following we describe the translation to the \textsc{ltp} setting.

\subsection{{\small LTP}-{\small DPA}}
The poles of $\Cg(s)$ are the $s\in\bbC$ where $\|\Cg(s)\|_{2}\ra \infty$. Equivalently, these are the zeros of the reciprocal function $\frac{1}{\|g(s)\|_2}$. Motivated by the classical Dominant Pole Algorithm (\textsc{dpa}), we adapt the method to the \textsc{ltp} setting by applying Newton’s method to the scalar objective function: $\frac{1}{\|g(s)\|_2}.$
\begin{equation}
\label{eq:gcol_newton}
    s_{k+1} 
    =
    s_k-\frac{\frac{1}{\|\Cg(s_k)\|_2}}{\frac{d}{ds}\frac{1}{\|\Cg(s_k)\|_2}} 
    = 
    s_k + \frac{\|\Cg(s_k)\|_2}{\frac{d}{ds}\|\Cg(s_k)\|_2}.
\end{equation}
Use 
\begin{equation}
    \frac{d}{ds}\|\Cg(s)\|_2 = \frac{\Cg(s)^*\frac{d}{ds}\Cg(s)}{\|\Cg(s)\|_2} \text{ and } \frac{d}{ds}\Cg(s) = -\langle \Bc\Bpsi^T,(s\bbI-\Lop)^{-2}\Bb\rangle
\end{equation}
to rewrite \eqref{eq:gcol_newton}.
\begin{equation}
    s_{k+1} = s_k + \frac{\|\Cg(s_k)\|_2}{\frac{\Cg(s)^*\frac{d}{ds}\Cg(s)}{\|\Cg(s)\|_2}} = s_k+\frac{\Cg(s_k)^*\Cg(s_k)}{\Cg(s_k)^*\Cg'(s_k)} = s_k - \frac{\Cg(s_k)^*\langle \Bc\BPsi^T,(s_k\bbI-\Lop)^{-1}\Bb\rangle}{\Cg(s_k)^*\langle \Bc\BPsi^T,(s_k\bbI-\Lop)^{-2}\Bb\rangle}
\end{equation}
Now substitute $\Balpha_k=\Cg(s_k)$, $\Bb(t) = (s_k\bbI-\Lop)\Bv_k(t)$ and $\Bc(t)\BPsi(t)^T\Balpha_k = (s_k\bbI-\Lop)^\star\Bw_k(t)$.
\begin{equation}
\begin{aligned}
    s_{k+1} &=  s_k - \frac{\langle (s_k-\Lop)^\star\Bw_k,\Bv_k\rangle}{\langle \Bw_k,\Bv_k\rangle} \\
    &= \frac{s_k\langle \Bw_k,\Bv_k\rangle - s_k\langle \Bw_k,\Bv_k\rangle+\langle \Bw_k,\Lop\Bv_k\rangle}{\langle \Bw_k,\Bv_k\rangle} = \frac{\langle \Bw_k,\Lop\Bv_k\rangle}{\langle \Bw_k,\Bv_k\rangle}.
\end{aligned}
\end{equation}
This leads to the \textsc{ltp} Dominant Pole Algorithm (\textsc{ltp}-\textsc{dpa}), Algorithm \ref{alg:ltpdpa}.

\begin{algorithm} [h!]
\caption{{\small LTP}-{\small DPA} }
\label{alg:ltpdpa}
\textbf{Input:} $\BA(t),\Bb(t),\Bc(t)$, initial guess -- $s_0$, tolerance -- $\varepsilon$.\\
Until convergence:
\begin{enumerate}
    \item Solve: \\
    $(s_k\bbI-\Lop) {\Bv_k(t)}=\Bb(t),$\\
    $\Balpha_k = \langle \Bc\BPsi^T,\Bv_k\rangle$, \\
    $(s_k\bbI-\Lop)^* \Bw_k(t)=\Bc(t)\BPsi(t)^T\Balpha_k$ 
    \item Update:\\
    $
    s_{k+1} = \frac{\langle\Bw_k,\Lop\Bv_k\rangle}{\langle\Bw_k,\Bv_k\rangle}
    $
     
    \item Check for convergence:\\
    If $\|\Lop\Bv_k-s_{k+1}\Bv_k\|_{L_2}<\varepsilon\|\Bv_k\|_{L_2}$, stop.
    \end{enumerate}
    \textbf{Return:} $\lambda = s_{k+1}$, $\Bp(t)=\Bv_k(t),$ $\Bq(t)=\frac{\Bw_k(t)}{\Bv_k(0)^*\Bw_k(0)}$
\end{algorithm}

\subsection{Improvements to {\small LTP}-{\small DPA} : Subspace Acceleration }
\label{se:SADPA}
In Algorithm~\ref{alg:ltpdpa}, $\Bv_k(t)$ and $\Bw_k(t)$ are used for a single iteration and then discarded. We can make convergence more precise by using historical information to build search spaces, $\BV(t)$ and $\BW(t)$. This idea, known as the Subspace Accelerated Dominant Pole Algorithm (\textsc{sadpa}), was introduced by Rommes in the \textsc{lti} setting \cite{rommesEfficientComputationTransfer2006}. Our translation to the \textsc{ltp} setting is mostly straightforward:
After finding $\Bv_k(t)$ and $\Bw_k(t)$, we use the inner product definition \eqref{eq:ip_def} to orthogonalize each against the columns of $\BV(t)$ and $\BW(t)$, respectively. Classical Gram-Schmidt is applied with reorthogonalization to ensure numerical stability. Once new vectors orthogonalized against previous ones and normalized, they  are appended as columns to $\BV(t)$ and $\BW(t)$. These subspaces are then used as modeling bases to build low rank \textsc{lti simo} approximations to $\Cg$. Specifically, at each iteration we construct 
\begin{equation}
\begin{aligned}
\widetilde{\Bh}(s) &= \Ctil{}^*(s\Etil - \Atil)^{-1}\btil, \\ \text{ where }
    \Atil = \langle \BW,\Lop\BV\rangle, \ &\Etil = \langle \BW,\BV\rangle, \ \btil = \langle \BW,\Bb\rangle, \ \Ctil = \langle \BV,\Bc\BPsi^T\rangle. 
\end{aligned}
\end{equation}
As the subspaces grow, $\widetilde{\Bh}$ becomes an increasingly accurate approximation of the PHV. We then rank the poles of $\widetilde{\Bh}$ according to the dominance criterion from \eqref{eq:gdom}, and use the most dominant one as the next shift, $s_k$. 

\subsection{Improvements to {\small LTP}-{\small DPA}: Deflation}
In order to search for multiple poles, we add a deflation procedure. Suppose we have converged to the eigentriple $(\lambda_1,\Bp_1(t),\Bq_1(t))$ of $\Lop$. Define $\Bb_{new}(t)=\Bb(t)-\Bp_1(t)\Bq_1(t)^*\Bb(t)$. Note that in the return statement of Algorithm \ref{alg:ltpdpa} we scale $\Bq_1(t)$ to ensure that $\Bq_1(0)^*\Bp_1(0) = 1$. By Theorem \ref{thm:invSubspBases} this ensures that $\Bq_1(t)^*\Bp_1(t) = 1$ for all $t$. Hence, 
\begin{equation}
\Bq_1(t)^*\Bb_{new}(t)=\Bq_1(t)^*\Bb(t)-\Bq_1(t)^*\Bp_1(t)\Bq_1(t)^*\Bb(t)=0,\end{equation}
and the adjusted residue will be zero. Since the left and right eigenspaces of the shifted pole, $\lambda_1+\ii\omega k$, are spanned by $e^{-\ii\omega k t}\Bq_1$ and $e^{-\ii\omega k t}\Bp_1$ (respectively), this deflation scheme zeroes out the residues of the entire family $\lambda_1+\ii\omega\bbZ$. Consequently, that $\lambda_1$ and all of its shifts will not be detected in future iterations.\\

Moreover, the deflation will not affect the residues of other poles. Let $\Bq_2(t)$ be a left eigenfunction corresponding to a different eigenvalue of $\BR$, $\lambda_2$, then 
\begin{equation}
\Bq_2(t)^*\Bb_{new}(t) = \Bq_2(t)^*\Bb(t)-\Bq_2(t)^*\Bp_1(t)\Bq_1(t)^*\Bb(t)=\Bq_2(t)^*\Bb(t),
\end{equation}
since left and right eigenfunctions are pointwise bi-orthogonal. A similar argument can be used to deflate $\Bc(t)$.
In the \textsc{lti} case, one can use the second most dominant pole of $\htil$ as the new shift. In the \textsc{ltp} setting the second most dominant pole of $\htil$ could very well be a shift of the recently found pole. Hence, after convergence to a pole, we recommend re-evaluating $\htil(s)$ with the deflated ports and using the dominant pole of the deflated system as the new shift.  

The modifications are summarized in Algorithm \ref{alg:ltpSADPA}.

\begin{algorithm} [h!]
\caption{{\small LTP}-{\small SADPA} (Subspace Accelerated Dominant Pole Algorithm)}
\label{alg:ltpSADPA}
\textbf{Input:} $\BA(t),\Bb(t),\Bc(t)$, initial guess(es) -- $s_0$, and $n_{want}$.

While $n_{found}<n_{want}$:
\begin{enumerate}
    \item Solve: \\
    $(s_k\bbI-\Lop) {\Bv_k(t)}=\Bb(t),$\\
    $\Balpha_k = \langle \Bc\BPsi^T,\Bv_k\rangle$, \\
    $(s_k\bbI-\Lop)^* \Bw_k(t)=\Bc(t)\BPsi(t)^T\Balpha_k$
    \item Build search spaces:\\
    $\BV(t) = \text{orth}([\BV(t)\ \Bv_k(t)]) $ and $\BW = \text{orth}([\BW(t)\ \Bw_k(t)])$
    \item Project $\Cg$:\\
    $\Atil = \langle \BW,\Lop\BV\rangle, \quad \Etil = \langle \BW,\BV\rangle, \quad \btil = \langle \BW,\Bb\rangle, \quad \widetilde{\BC} = \langle \BV,\Bc\BPsi^T\rangle$
    \item Search for poles:\\
    $\tilde\BX,\tilde\Lambda,\tilde\BY$ $\leftarrow$ find dominant poles of $\widetilde{\Bh}(s)=\widetilde{\BC}\,^*\left(s\Etil-\Atil\right)^{-1}\btil$
    \item Approximate eigentriple:\\
    $\tilde{\Bp}(t)=\BV(t)\tilde{\Bx}_1/\|\BV(t)\tilde{\Bx}_1\|$, $\tilde{\Bq}(t)=\BW(t)\tilde{\By}_1$
    \item Check for convergence.\\
    If $\|\Lop\tilde{\Bp}(t)-\tilde{\lambda}_1\tilde{\Bp}(t)\|<\varepsilon$:
    \begin{enumerate}
        
        \item Update: $\tilde{\Bq}(t)=\frac{\tilde{\Bq}(t)}{\tilde{\Bp}(0)^*\tilde{\Bq}(0)}$, $\Lambda=[\Lambda, \tilde{\lambda}_1]$, $\BP(t) = [\BP(t),\tilde{\Bp}(t)]$, $\BQ(t) = [\BQ(t), \tilde{\Bq}(t)]$, $n_{found}=n_{found}+1$
        \item Deflate: $\Bb(t) = \Bb(t) - \tilde{\Bp}(t)\tilde{\Bq}(t)^*\Bb(t)$, $\Bc(t) = \Bc(t) - \tilde{\Bq}(t)\tilde{\Bp}(t)^*\Bc(t)$
        \item If $n_{found}<n_{want}$: Rebuild $\widetilde{\Bh}(s)$ and find dominant pole (Re-run steps 3 and 4 here).  Set $s_{k+1}=\tilde{\lambda}_1$.
        \end{enumerate}
    Else: $s_{k+1}=\tilde\lambda_1$
    \end{enumerate}
    \textbf{Return:} $\BQ(t)$, $\BLambda$, $\BP(t)$ 
\end{algorithm}

\section{An Illustrative Example}
\label{se:numex}
To illustrate concepts discussed above, we introduce a simple \textsc{ltp} system. 
From \eqref{FloqBVP}, we have $\BA(t) = \dot{\mathbf{P}}(t)\mathbf{P}(t)^{-1}+\mathbf{P}(t)\mathbf{R}\mathbf{P}(t)^{-1}$. Through an intelligent choice of $\BP$, we can write down $\BP^{-1}$ explicitly:  
\begin{equation}
\centering
\label{eq:exP}
\mathbf{P}(t)=\begin{bmatrix}
    1 & \sin(t) & & \\
     & 1 & 0  & &\\
     & & \ddots & \ddots &
\end{bmatrix}\quad \Ra \quad 
\mathbf{P}(t)^{-1}=\begin{bmatrix}
    1 & -\sin(t) & & \\
     & 1 & 0  & &\\
     & & \ddots & \ddots &
\end{bmatrix}.
\end{equation}
This allows us to choose $\BR$ as we wish and experiment with the spectrum of $\Lop$. We will specify the values of $\BR$ as well as $\Bb(t)$ and $\Bc(t)$ keeping in mind the comments from \S\ref{se:DPexpectations}. We choose $\BR\in\bbR^{1000\times1000}$ with 
\begin{itemize}
    \item 10 eigenvalues logarithmically spaced between $-10^{-4}$ and $-1$ and
    \item 990 other eigenvalues logarithmically spaced between $-10^3$ and $-10^6$.
\end{itemize}
In \textsc{matlab} notation, $\mathtt{ R = -diag([logspace(-4,0,10)), logspace(3,6,990)])}$.\\

We set $\Bb$ and $\Bc$ to be vectors of ones. Note that after the Floquet transform, the ports will no longer be constant: 
\begin{equation}     
\widehat{\Bb}(t)=\BP(t)^{-1}\begin{bmatrix}
        1\\
        1\\
        \vdots
    \end{bmatrix}=\begin{bmatrix}
    1-\sin(t)\\
    1\\
    1-\sin(t)\\
    1\\
    \vdots
\end{bmatrix}
\text{ and }\widehat{\Bc}(t)=\BP(t)^*\begin{bmatrix}
        1\\
        1\\
        \vdots
    \end{bmatrix}=\begin{bmatrix}
    1\\
    1+\sin(t)\\
    1\\
    1+\sin(t)\\    
    \vdots
\end{bmatrix}.
\end{equation}
This choice of $\Bb(t)$ and $\Bc(t)$ guarantees that the residues for all poles have the same norm. Consequently, dominance (according to both $\BH_{ext}$ and $\Cg$) is governed solely by proximity to the imaginary axis.

 We use the \textsc{trigfun} package within the \textsc{Chebfun} toolbox to represent continuous time variables. \textsc{Chebfun} uses Fourier spectral collocation with variable mesh sizes to represent operators \cite{Driscoll2014}. The results shown here come from tinkering with \textsc{Chebfun} to fix the mesh size (32 collocation points) for faster computation. We compensate for various effects of discretization (such as spurious eigenvalues) but in the interest of brevity, we do not elaborate on these matters here. For a more detailed discussion of how trigonometric collocation affects the algorithms presented see \cite{benderReductionPeriodicSystems}. For a more general exposition of spectral collocation, see \cite{boydChebyshevFourierSpectral}.

Figure \ref{fig:dpa_eig} shows how \textsc{ltp}-\textsc{dpa} (Algorithm \ref{alg:ltpdpa}) performs on our example when given an initial guess, $s_0 = -0.1$, that is far from the most dominant pole, $\lambda_1 = -10^{-4}$.

\begin{figure} [h!]
    \centering
    \includegraphics[width=0.8\linewidth]{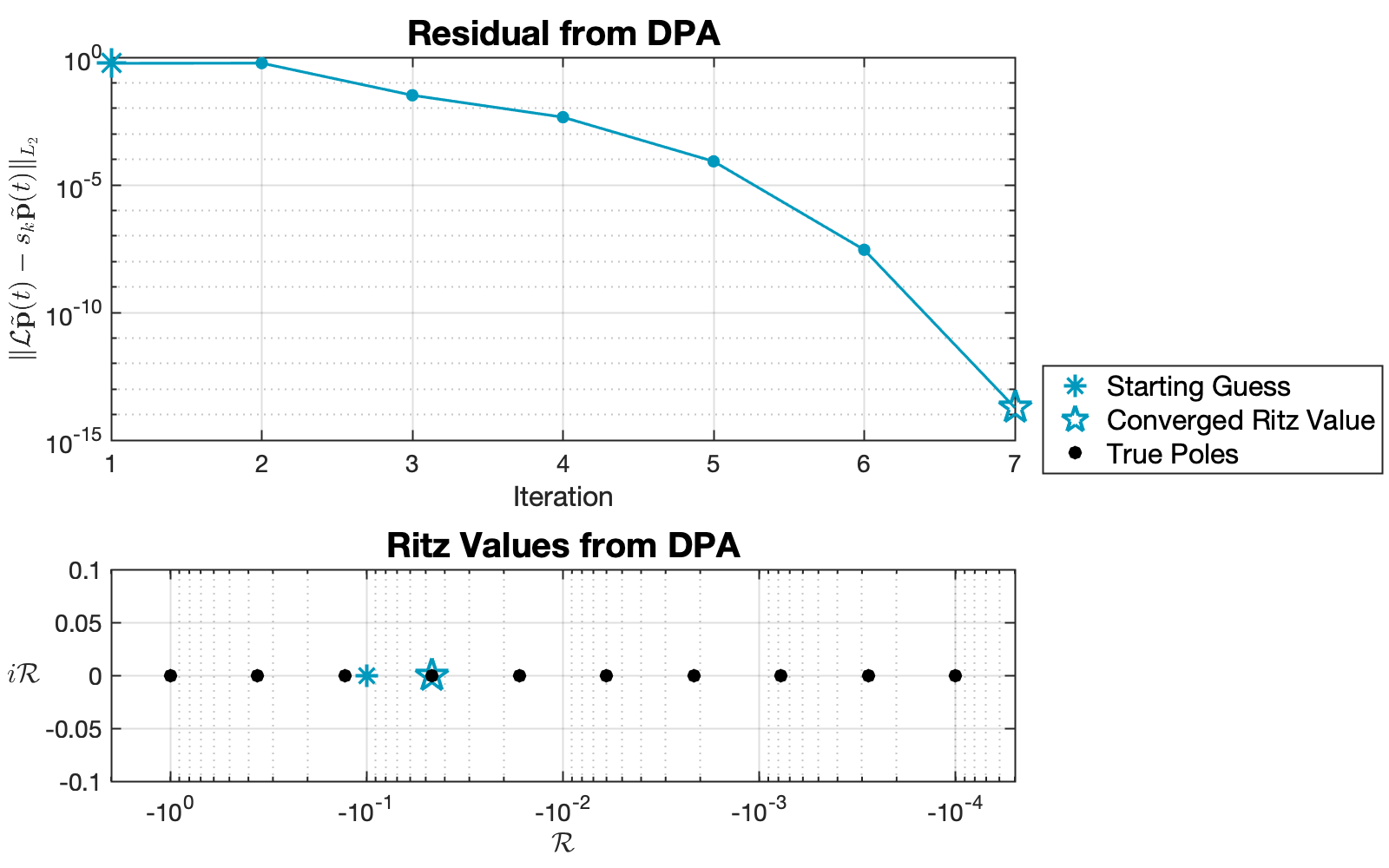}
    \caption{Convergence of Algorithm~\ref{alg:ltpdpa} using a poor initial guess of $s_0 = -0.1$. Tolerance set to $10^{-8}$.}
    \label{fig:dpa_eig}
\end{figure}
As explained earlier, dominance is determined solely by proximity to the imaginary axis. It is therefore encouraging that the iterates of \textsc{ltp}-\textsc{dpa} move to the right of the initial guess, however the algorithm finds a pole far from what what we desire. \\ 

Figure \ref{fig:sadpa_eig} demonstrates the benefits of adding deflation and subspace acceleration.

\begin{figure}[ht!]
    \centering
    \includegraphics[width=0.8\linewidth]{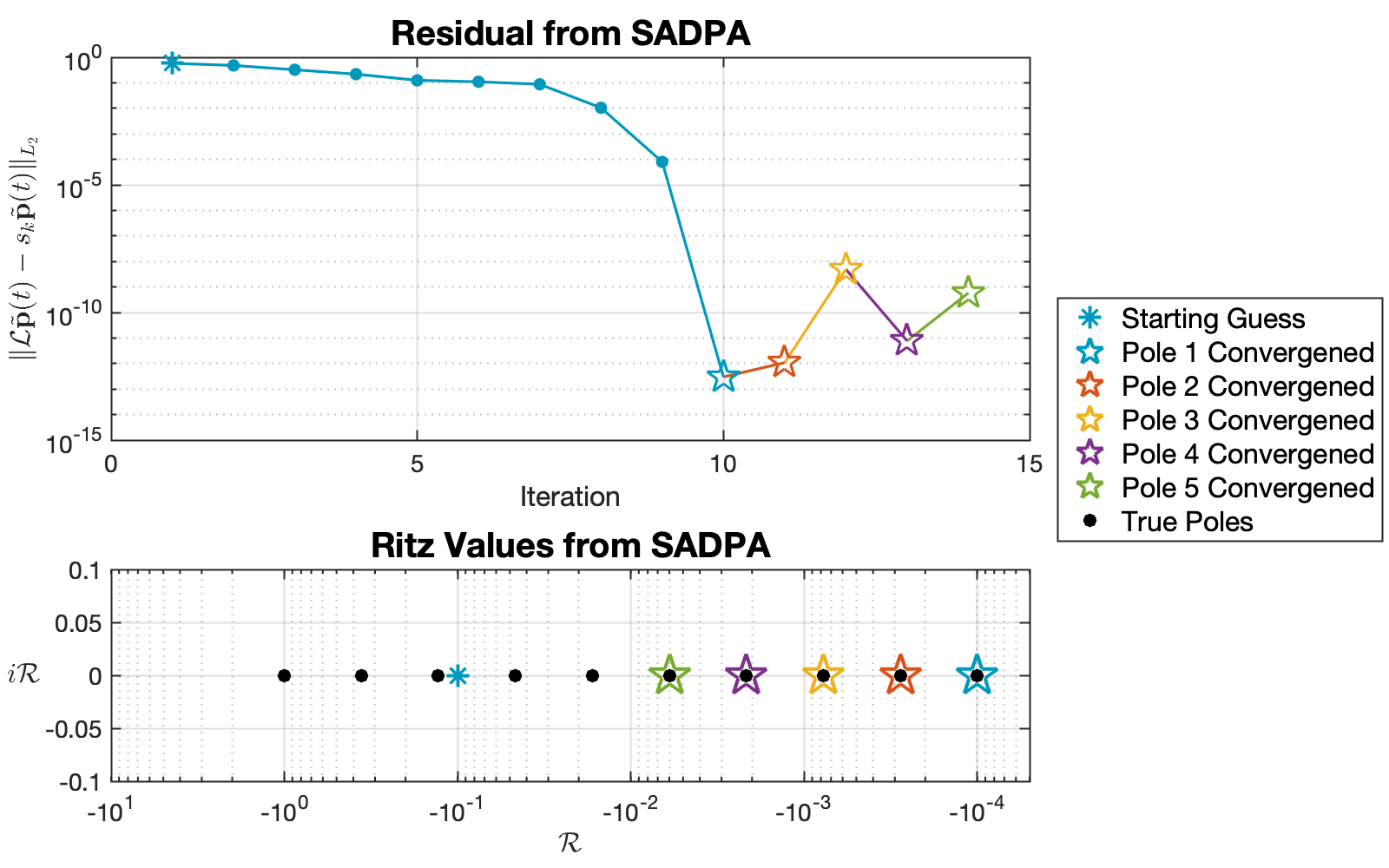}
    \caption{Convergence of Algorithm~\ref{alg:ltpSADPA} using a poor initial guess of $s_0 = -0.1$. Tolerance set to $10^{-8}$.}
    \label{fig:sadpa_eig}
\end{figure}

Unlike \textsc{ltp}-\textsc{dpa}, the algorithm \textsc{ltp}-\textsc{sadpa} is able to bypass less dominant poles and converge directly to the most dominant mode. After identifying the first dominant pole, subsequent poles are typically found in fewer iterations. In our test case using a bad initial guess, \textsc{sadpa} required only 14 total iterations to identify the five most dominant poles. This efficiency has significant payoff for large-scale systems that can be well-approximated by a small number of modes. The number of required backsolves to construct a reduced-order, port-isolated \textsc{ltp} model (as in \eqref{eq:ROM}) is dramatically lower than what is needed to first compute the full Floquet transform and then perform model reduction. \\

In \S\ref{se:DPexpectations}, we discussed conditions under which dominant pole truncation performs well and designed our example with these in mind. Figure~\ref{fig:simpexMR} confirms the effectiveness of the resulting reduced-order model.

\begin{figure}[ht!]
  \centering
  \begin{subfigure}[b]{0.45\textwidth}
    \includegraphics[width=1.05\linewidth]{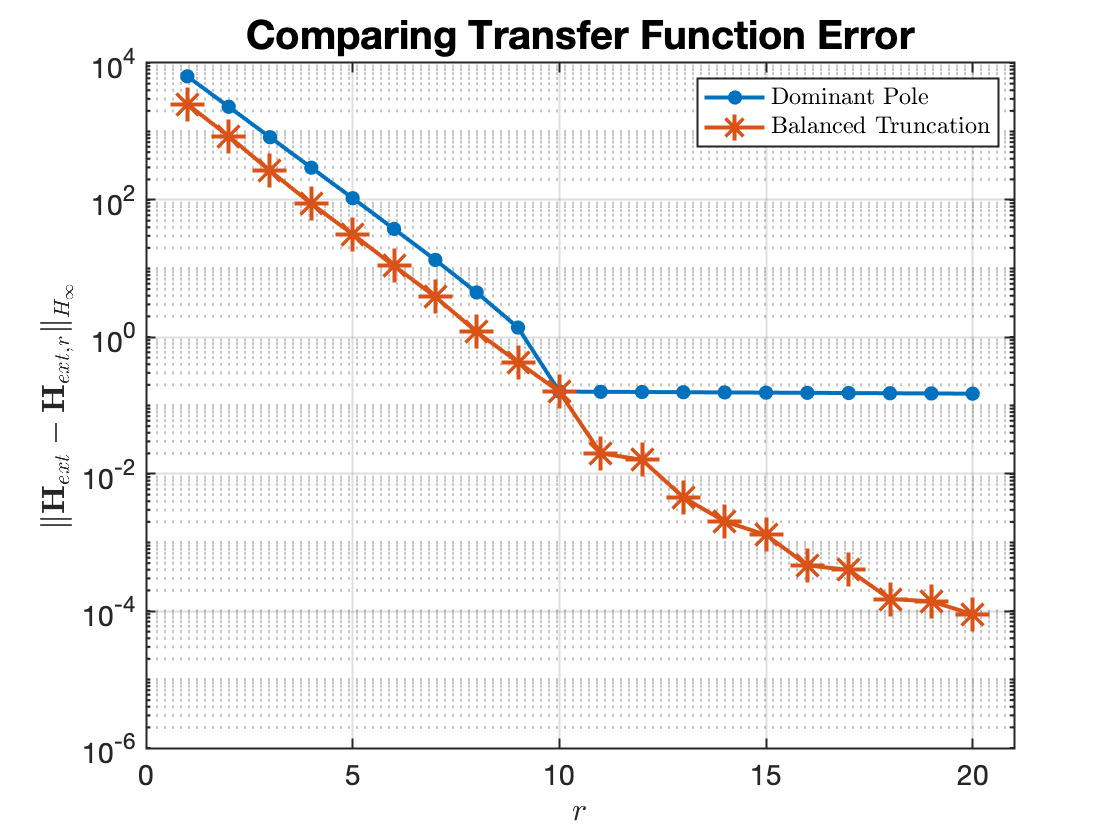}
    \caption{Dominant pole truncation performs similarly to Balanced Truncation -- a gold standard for \textsc{lti} Model Reduction -- for the first 10 orders.}
  \end{subfigure}
  \hfill
  \begin{subfigure}[b]{0.51\textwidth}
    \includegraphics[width=1.05\linewidth]{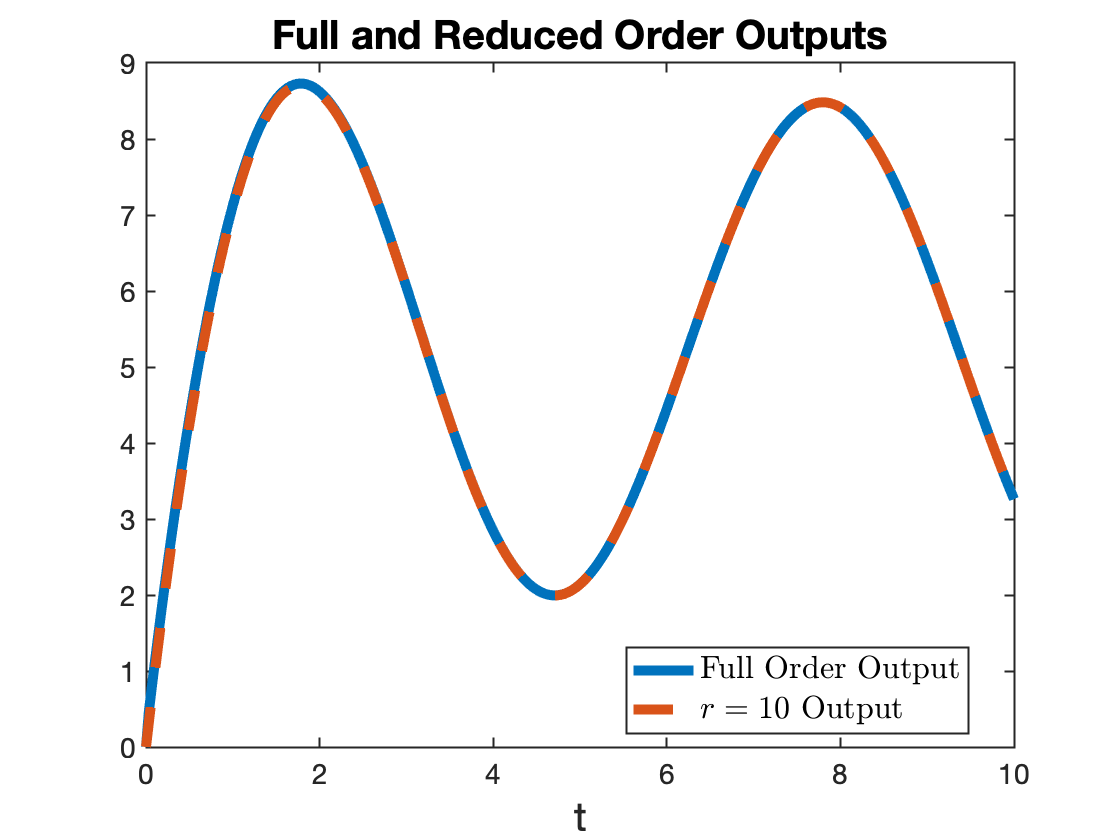}
    \caption{With $u(t) = e^{-t}$, output from the $r=10$ model nearly matches output from the original model with order $n=1000$. }
  \end{subfigure}
  \caption{Model Reduction Performance for Illustrative Example}
  \label{fig:simpexMR}
    \end{figure}

Although Balanced Truncation ultimately achieves lower error than dominant pole truncation,  improvement beyond the first 10 modes is marginal. In particular, the relative $H_\infty$ error drops below $10^{-5}$ after the first 10 poles. \textsc{sadpa} converged to the 10 most dominant poles of $\BH_{ext}$ after 25 iterations using $s_0=-0.1$. The corresponding reduced-order output recorded a maximum pointwise relative error of $\approx.3$ and an average pointwise relative error of $\approx .005$. \\

\section{Conclusions} \label{sec6_concl}

Conventional model reduction techniques for linear time-periodic systems typically rely on access to the Floquet transform.  Computing this transform can be prohibitively expensive for large-scale systems, limiting the practicality of these methods. In this work, we introduce the partial Floquet transform based on the identification of effective invariant subspaces which are extracted efficiently by a variant of the dominant pole algorithm \textsc{dpa}. By constructing a partial Floquet transform using only a subset of the system's most dominant poles, we are able to produce effective reduced-order models without requiring full spectral information.

The primary limitation of our proposed approach lies in its heuristic nature (which likewise is shared with the original \textsc{dpa} approach): there are currently no performance guarantees, and it may be difficult to predict in advance whether a given system is well-suited to dominant pole truncation. Addressing this challenge -- either through the development of diagnostic criteria or by integrating the method with more robust reduction frameworks -- remains an active area  of ongoing research.  

\section*{Acknowledgements:}
 This project was supported by the National Science Foundation (NSF) of the United States under Grant No. 2318880.  The authors express their appreciation to Profs. Mark Embree and Serkan Gugercin of Virginia Tech for insightful comments on the analysis underlying Sections 3 and 4.

\bibliography{MORe_BenderBeattie}

\end{document}